\newtheorem{theorem}{Theorem}[section]
\newtheorem{proposition}[theorem]{Proposition}
\newtheorem{corollary}[theorem]{Corollary}
\newtheorem{scholium}[theorem]{Scholium}
\newtheorem{conjecture}[theorem]{Conjecture}
\newtheorem{example}[theorem]{Example}
\newtheorem{remark}[theorem]{Remark}
\newtheorem{definition}[theorem]{Def\mbox{}inition}
\title[Self-extension property]{Non-Hilbert Banach spaces with the self-extension property}
\author[J.B.Garc\'ia]{Juan Bosco Garc\'ia-Guti\'errez}
\address{Department of Mathematics, University of C\'adiz, Puerto Real, Spain, 11510, EU}
\email{juan.bosco@uca.es}
\author[F.J.Garc\'ia]{Francisco Javier Garc\'{i}a-Pacheco}
\address{Department of Mathematics, University of C\'adiz, Puerto Real, Spain, 11510, EU}
\email{garcia.pacheco@uca.es}
\author[P.Piniella]{Paula Piniella}
\address{Department of Mathematics, University of C\'adiz, Puerto Real, Spain, 11510, EU}
\email{piniella5050@gmail.com}
\author[F.Rambla]{Fernando Rambla-Barreno}
\address{Department of Mathematics, University of C\'adiz, Puerto Real, Spain, 11510, EU}
\email{fernando.rambla@uca.es (corresponding author)}
\begin{document}

\begin{abstract}
In 1992, Kiendi, Adamy and Stelzner investigated under which conditions a certain type of function constituted a Lyapunov function for some time-invariant linear system. Six years later, it was obtained that this property holds if and only if the Banach space enjoys the self-extension property. However, the knowledge of these spaces needed to be extended in order to make useful this characterization, since there were little information on which classic Banach spaces satisfy this property and its relations with other classic properties. We present the self-extension property in a wider frame, relating it to other well-known spaces as the $1$-injective or $1$-projective ones. We investigate the property in two important low-dimensional classic Banach spaces: $\mathbb{R}_1^3$ and $\mathbb{R}_1^4$. We introduce the concept of $k-$self-extensible spaces and a discussion of the stability of the property. We also show that every real Banach space of dimension greater than or equal to $3$ can be equivalently renormed to fail the self-extension property. Finally, we summarize some consequences of our study and mention some open questions which appear naturally.
\end{abstract}

\subjclass[2000]{Primary 46B20, 46B03; Secondary 46B07}
\keywords{Lyapunov function, Banach space, projective space, injective space, self-extension}

\maketitle

\vspace{3ex}

\section{Introduction}

In \cite{KAS1992}, the authors look for conditions to make a function def\mbox{}ined as $$V:\mathbb{R}^n \to \mathbb{R} $$ $$V(x)=\|W x\|$$ (where $W\in \mathcal{M}_{m\times n}(\mathbb{R})$ has rank $n$) be a Lyapunov function for some time-invariant linear system, ending up with a characterization of this property. However, in \cite{H1994} it is highlighted that the result of the previous paper does not constitute a characterization but only gives a suf\mbox{}f\mbox{}icient condition. Moreover, in \cite{LPR1998} is proven that the necessity only holds in a certain class of normed spaces (those satisfying a property called self-extension which will be def\mbox{}ined below).

To be precise, the following was proved:
\begin{theorem}[\cite{LPR1998}]
Let $\|\ \|$ be a norm in $\mathbb{R}^m$. The following assertions are equivalent:
\begin{itemize}
\item $(\mathbb{R}^m,\|\ \|)$ has the self-extension property.
\item Let $V: \mathbb{R}^n \to \mathbb{R}$ be given by $V(x) = \|Wx\|$ where $W \in {\mathbb{R}}^{m \times n}$ is of full rank. Then $V$ 
is a Lyapunov function for a discrete-time system $x_{t+1}=F x_t$, where $F \in {\mathbb{R}}^{n \times n}$, if and only if there exists a matrix $Q \in {\mathbb{R}}^{m \times m}$ such that $WF=QW$ and $\|Q\| < 1$.
\end{itemize}
\end{theorem}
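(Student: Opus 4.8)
The plan is to unwind the notion of a discrete Lyapunov function, separate the internal biconditional of the second assertion into a trivial half and a substantive half, and then recognise the substantive half as exactly an operator-extension problem governed by the self-extension property. Write $X=(\mathbb{R}^m,\|\cdot\|)$. Since $W$ is of full (column) rank, $\ker W=\{0\}$, so $x\mapsto\|Wx\|$ is a norm on $\mathbb{R}^n$; hence $V$ is positive definite and radially unbounded, and $V$ is a Lyapunov function for $x_{t+1}=Fx_t$ exactly when it strictly decreases along trajectories, i.e. $\|WFx\|<\|Wx\|$ for all $x\neq 0$. With this reformulation the implication ``$\exists Q$ with $WF=QW$ and $\|Q\|<1$ $\Rightarrow$ $V$ is Lyapunov'' is immediate and uses nothing about the norm, since $\|WFx\|=\|QWx\|\le\|Q\|\,\|Wx\|<\|Wx\|$ whenever $x\neq 0$. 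Thus the theorem reduces to proving that the self-extension property is equivalent to the converse implication $(\mathrm{H})$: for every full-rank $W$ and every $F$, if $\|WFx\|<\|Wx\|$ for all $x\neq 0$, then some $Q$ satisfies $WF=QW$ and $\|Q\|<1$.

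For ``self-extension $\Rightarrow(\mathrm{H})$'' I would put $M=\operatorname{range}(W)$ and define $Q_0\colon M\to M$ by $Q_0(Wx)=WFx$, which is well defined and linear because $W$ is injective. The hypothesis says $\|Q_0y\|<\|y\|$ for every nonzero $y\in M$; as $\dim M<\infty$ the unit sphere of $M$ is compact and $y\mapsto\|Q_0y\|$ continuous, so $\|Q_0\|$ is attained and is strictly below $1$. Applying the self-extension property to $Q_0$ yields $Q\colon X\to X$ with $Q|_M=Q_0$ and $\|Q\|=\|Q_0\|<1$, and the equality $Q|_M=Q_0$ is precisely $QW=WF$.

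The converse ``$(\mathrm{H})\Rightarrow$ self-extension'' carries the real content. Given a subspace $M$ and an endomorphism $T$ of $M$ to be extended, normalised so that $\|T\|\le 1$, I would fix $W$ with $\operatorname{range}(W)=M$, replace $T$ by the strict contraction $T_\varepsilon=(1-\varepsilon)T$, and set $F=W^{+}T_\varepsilon W$ for a left inverse $W^{+}$, so that $WF=T_\varepsilon W$ and $\|WFx\|\le(1-\varepsilon)\|Wx\|<\|Wx\|$ for $x\neq 0$. Then $(\mathrm{H})$ produces $Q_\varepsilon$ with $\|Q_\varepsilon\|<1$ and $Q_\varepsilon|_M=T_\varepsilon$. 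The main obstacle is the mismatch of strictness: $(\mathrm{H})$ only ever returns strict contractions, whereas self-extension must yield a norm-preserving extension even when $\|T\|=1$. I would bridge this by rescaling to $\tfrac{1}{1-\varepsilon}Q_\varepsilon$, an extension of $T$ of norm below $\tfrac{1}{1-\varepsilon}$, and then extracting a limit $Q$ as $\varepsilon\to 0$ using compactness of the bounded set of matrices extending $T$; since the condition $Q|_M=T$ is closed, $Q$ extends $T$, while $\|Q\|\le\lim\tfrac{1}{1-\varepsilon}=1$ and $\|Q\|\ge\|T\|$ hold automatically, giving the desired norm-preserving extension. The one conceptual check I would make before finalising is that the Lyapunov equation $WF=QW$ forces $M=\operatorname{range}(W)$ to be $Q$-invariant, so the self-extension property at play is the one about endomorphisms of subspaces of $X$; I would confirm this coincides with the definition given below.
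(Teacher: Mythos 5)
The paper does not prove this theorem: it is quoted verbatim from \cite{LPR1998} as the motivating result, so there is no in-paper argument to compare yours against. Judged on its own, your reconstruction is sound and is, as far as one can tell, the natural route: the internal biconditional of the second assertion splits into a trivial half (a strict contraction $Q$ intertwined by $W$ forces $\|WFx\|=\|QWx\|<\|Wx\|$) and the substantive half $(\mathrm{H})$, which is exactly the norm-preserving extension problem for the endomorphism $Q_0(Wx)=WFx$ of $\operatorname{range}(W)$; compactness of the unit sphere converts pointwise strict decrease into $\|Q_0\|<1$, and the $\varepsilon$-perturbation plus a compactness limit bridges the strict/non-strict mismatch in the converse. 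Two small points to tidy up. First, your normalisation should be $\|T\|=1$ (for $T\neq 0$), not $\|T\|\le 1$: with $\|T\|<1$ your limit only gives $\|Q\|\le 1$, which need not equal $\|T\|$; rescale $T/\|T\|$ first and multiply back at the end, treating $T=0$ separately. Second, the reduction hinges on the convention that a Lyapunov function for the discrete system means strict decrease $V(Fx)<V(x)$ for all $x\neq 0$ (equivalently, by compactness, $V(Fx)\le\rho V(x)$ for some $\rho<1$); this is the convention of \cite{KAS1992} and \cite{LPR1998}, but it should be stated explicitly since with a non-strict convention the conclusion would read $\|Q\|\le 1$ instead of $\|Q\|<1$.
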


They also prove that Hilbert spaces and $\ell_\infty(\Gamma)$ spaces satisfy this property and construct a 3-dimensional example that fails it. However, there is neither a deep study of which classical spaces are self-extensible nor a research on the relation between the self-extension property and other classical properties. The aim of this paper is to go further on these directions.

In Section \ref{defs} we present the def\mbox{}inition of the self-extension property and we introduce the concepts of $1$-injective and $1$-projective space, linking them to self-extensible spaces. Since they are closely related to self extensible spaces, Sections 3 and 4 discuss and explore the relations between $1$-complemented spaces and norm-attaining functionals and ref\mbox{}lexive spaces. The main results are found in Section 5 and 6, where it is proven that $(\mathbb{R}^3,\|\cdot\|_1)$ satisf\mbox{}ies the self-extension property but $(\mathbb{R}^4,\|\cdot\|_1)$ does not; a discussion on the stability of the property is also included.

We will deal mainly with real Banach spaces, which is the setting where the property was originally def\mbox{}ined, although many results can be easily translated to the complex case. In the sequel, we use $\ell_p^n$ to denote either the real or the complex $n$-dimensional space with the norm $\|\cdot\|_p$, whereas $\mathbb{R}_p^n$ is to be used for the real case only.

\section{Basic def\mbox{}initions and results}\label{defs}

\begin{definition} A Banach space $X$ is said to have the {\bf self-extension property} if for every $Y$ subspace of $X$ and every $T:Y\to Y$ continuous linear operator there exists a continuous linear extension $S:X\to X$ such that $S|_Y=T$ and $\|S\|=\|T\|$.
\end{definition}

\begin{definition} Let $X$ be a Banach space and $k\geq 1$. A subspace $Y$ is said to be {\bf [$\pmb{k}$-]complemented} in $X$ if there exists a projection $P:X\to Y$ [such that $\|P\|\leq k$. We also say $P$ is a $k$-projection]. 
\end{definition}

We remind the reader that a projection on a Banach space $X$ is a continuous, linear and idempotent map $P:X\to X$. The dual operator $P^*:X^*\to X^*$ is also a projection. The complementary projection of $P$ is def\mbox{}ined as $I-P$, which is also a projection. Every nonzero projection has norm greater than or equal to $1$.

\begin{definition} Let $k \geq 1$. A Banach space $X$ is called a {\bf [$\pmb{k}$-]projective Banach space} if every subspace $Y$ of $X$ is [$k$-]complemented in $X$.
\end{definition}

\begin{definition} Let $k \geq 1$. A Banach space $X$ is called a {\bf [$\pmb{k}$-]injective Banach space} if it is [$k$]-complemented in every $Y$ such that $X \subseteq Y$.
\end{definition}

Notice that they are analogous def\mbox{}initions, switching subspaces with superspaces.

\begin{remark}\label{charinj}
In accordance with \cite[Page 123]{D1973}, a Banach space $X$ is $k$-injective if and only if satisf\mbox{}ies any of the following (for all arbitrary Banach spaces $Y$, $Z$):
\begin{enumerate}
\item If $X\subseteq Y$ and $T:X\to Z$ is linear and continuous, then there exists a continuous linear extension $S:Y\to Z$ with $\|S\|\leq k\|T\|$.
\item If $Z\subseteq Y$ and $T:Z\to X$ is linear and continuous, then there exists a continuous linear extension $S:Y\to X$ with $\|S\|\leq k\|T\|$.
\item If $T: X \to Y$ is a linear isometry, there exists a continuous linear $S: Y \to X$ such that $\|S\| \leq k$ and $S \circ T$ is the identity.
\end{enumerate}
If $k=1$, then  $\|S\|\leq k\|T\|$ can be replaced by  $\|S\|=\|T\|$ in {\em (1,2)} and $\|S\| \leq k$ can be replaced by $\|S\| =1$ in {\em (3)}.
\end{remark}

Moreover, some of the previous classes are completely characterized:
\begin{itemize}
\item The $1$-injective spaces are exactly the $\mathcal{C}(K)$ spaces for $K$ extremally disconnected, i.e., the closure of every open set in $K$ is open (Kelley \cite{Ke1952}).
\item A Banach space is $1$-projective if and only if it is a Hilbert space or 2-dimensional (Kakutani \cite{Ka1939} and Bohnenblust \cite{Bo1942}).
\item The projective spaces are exactly the spaces linearly isomorphic to a Hilbert space (Lindenstrauss and Tzafriri \cite{LT}).
\end{itemize}

All the previous characterizations are very deep results in Banach space theory. However, in \cite{Ph1940} it was observed that the Hahn-Banach Theorem applied coordinatewise shows that $\ell_\infty(\Gamma)$-spaces are $1$-injective (this is a particular and simple case of the aforementioned Kelley's result).

\begin{theorem}\label{thsuffauto}
Let $X$ be a Banach space. If $X$ is $1$-injective or $1$-projective, then $X$ satisf\mbox{}ies the self-extension property.
\end{theorem}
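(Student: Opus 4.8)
The plan is to treat the two hypotheses separately, since each corresponds to a different mechanism for producing the extension: in the projective case I would \emph{compose} $T$ with a norm-one projection, whereas in the injective case I would directly \emph{extend} $T$ via the extension characterization of Remark~\ref{charinj}.

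First suppose $X$ is $1$-projective. Fix a subspace $Y\subseteq X$ and a continuous linear operator $T:Y\to Y$; I may assume $Y\neq\{0\}$, the remaining case being trivial. By hypothesis $Y$ is $1$-complemented, so there is a projection $P:X\to X$ with range $Y$ and $\|P\|\leq 1$. I would then set $S:=T\circ P$, regarding $Ty\in Y\subseteq X$, so that $S:X\to X$ is continuous and linear. Since $P$ fixes $Y$ pointwise, $S|_Y=T$, which is the required extension. For the norm, on one hand $\|S\|\leq\|T\|\,\|P\|\leq\|T\|$; on the other, restricting to the unit ball of $Y$ gives $\|S\|\geq\|S|_Y\|=\|T\|$. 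Hence $\|S\|=\|T\|$.

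Next suppose $X$ is $1$-injective. Again fix $Y\subseteq X$ and a continuous linear $T:Y\to Y$. The key observation is to read $T$ as a map into the whole space, $T:Y\to X$, whose norm is unchanged because $Y$ carries the norm induced by $X$. Now I would invoke Remark~\ref{charinj}(2) with the ambient superspace taken to be $X$ itself: with $Z:=Y\subseteq X$ and target space $X$, the $1$-injectivity of $X$ yields a continuous linear extension $S:X\to X$ of $T:Y\to X$ with $\|S\|=\|T\|$. Since $S|_Y=T$ takes values in $Y$, the map $S$ is a self-map of $X$ extending $T$ with the correct norm, establishing the self-extension property.

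Neither step hides a deep difficulty; the argument is essentially a repackaging of the relevant definitions. The only point requiring care — and the place where a careless argument could slip — is the bookkeeping in the injective case: one must correctly match the roles of subspace, superspace and target when applying Remark~\ref{charinj}(2), taking the superspace to be $X$ itself, and must verify that viewing $T$ as a map into $X$ rather than into $Y$ leaves its norm unchanged, which holds precisely because $Y$ is isometrically embedded in $X$.
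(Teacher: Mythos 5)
Your proof is correct and follows essentially the same route as the paper's: in the projective case you compose $T$ with a norm-one projection onto $Y$ and check the two norm inequalities, and in the injective case you apply Remark~\ref{charinj}(2) with superspace $X$ to $T$ viewed as a map $Y\to X$ (the paper writes this as $i\circ T$), exactly as the paper does. The norm bookkeeping in both cases matches the paper's argument.
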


\begin{proof}
We will distinguish between the cases of $1$-injective and $1$-projective:

\begin{itemize}
\item Suppose f\mbox{}irst that $X$ is $1$-injective. Let $Y$ be a closed subspace of $X$ and $T:Y\to Y$ a continuous linear operator. Since the inclusion $i: Y\hookrightarrow X$ is a monomorphism, by Remark \ref{charinj}(2), there exists a continuous linear operator $S:X\to X$ such that $S\circ i= i\circ T$ and $\|S\|\leq \|i\circ T\|$. Note that $S|_Y=T$, so $S$ is an extension of $T$, hence $\|S\|\geq \|T\|$. On the other hand, $\|S\|\leq \|i\circ T\|\leq \|i\|\|T\|=\|T\|$. As a consequence, $\|S\|=\|T\|$.

\item Assume now that $X$ is $1$-projective. Let $Y$ be a closed subspace of $X$ and $T:Y\to Y$ a continuous linear operator. By hypothesis, $Y$ is the range of a norm-one projection $P:X\to X$. Now $S = T\circ P:X\to X$ satisf\mbox{}ies $S|_Y=T$ and $$\|T\|=\left\|(T\circ P)|_Y\right\|\leq \|S\|\leq \|T\|\|P\| \leq \|T\|.$$
\end{itemize}
\end{proof}

Note that the comments $1)$ and $2)$ in section IV of \cite{LPR1998} can be seen as particular cases of the previous theorem.

\begin{example}
Some instances of Banach spaces satisfying the self-extension property were pointed out at the introduction. We will provide two examples separating the properties $1$-injective and $1$-projective.
\begin{itemize}
\item $\ell_\infty$ is a $1$-injective Banach space which is not even projective, since $c_0$ is not complemented in $\ell_\infty$ (\cite{Ph1940}).
\item Any Hilbert space of dimension strictly greater than $1$ is $1$-projective but not $1$-injective.
\end{itemize}
\end{example}

\begin{proposition}
Let $X$ be a $1$-projective Banach space and let $Y$ be a $1$-injective closed subspace of $X$. Then either $\dim(Y)=1$ or $X=Y=\ell_\infty^2$.
\end{proposition}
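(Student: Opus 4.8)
The plan is to reduce the statement entirely to the two structural classifications quoted just above it: the Kakutani--Bohnenblust description of $1$-projective spaces (\cite{Ka1939}, \cite{Bo1942}), which says $X$ is either a Hilbert space or $2$-dimensional, and Kelley's description of $1$-injective spaces (\cite{Ke1952}) as the $\mathcal{C}(K)$ with $K$ extremally disconnected. Since $X$ is $1$-projective, I would first split into the two cases: $X$ a Hilbert space, and $\dim(X)=2$. Throughout I assume $Y\neq\{0\}$, as is implicit in calling it $1$-injective.

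In the Hilbert case I would argue by strict convexity. Any closed subspace $Y$ of a Hilbert space is itself a Hilbert space, hence strictly convex. On the other hand, if $Y$ is $1$-injective with $\dim(Y)\ge 2$, then by Kelley's theorem $Y$ is isometric to some $\mathcal{C}(K)$ with $|K|\ge 2$, and no such space is strictly convex: choosing two distinct points of $K$, the constant function $1$ and a Urysohn function equal to $1$ at the first point and $0$ at the second are two distinct norm-one elements whose midpoint still has norm $1$. This contradiction forces $\dim(Y)\le 1$, that is, $\dim(Y)=1$.

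In the case $\dim(X)=2$, the subspace $Y$ satisfies $\dim(Y)\in\{1,2\}$. If $\dim(Y)=1$ we are done, so assume $\dim(Y)=2$; then $Y=X$. Here I would use that a finite-dimensional $1$-injective space is exactly an $\ell_\infty^n$: by Kelley it is some $\mathcal{C}(K)$, and finite-dimensionality forces $K$ to be finite, hence discrete, so $\dim(Y)=|K|=2$ gives $Y\cong\ell_\infty^2$. Therefore $X=Y=\ell_\infty^2$, which is exactly the second alternative of the statement (and is consistent with the two cases overlapping at $\ell_2^2$, since $\ell_2^2$ is not $1$-injective and so cannot occur as a $2$-dimensional $Y$).

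The only delicate point, and the step I expect to be the main obstacle, is extracting from Kelley's characterization the two facts that drive both cases: that a $1$-injective space of dimension $\ge 2$ fails strict convexity, and that a finite-dimensional $1$-injective space is isometrically $\ell_\infty^n$. Making these rigorous requires the observations that $\dim\mathcal{C}(K)=|K|$ for finite $K$, that a finite compact Hausdorff space is discrete, and that any $\mathcal{C}(K)$ with $|K|\ge 2$ contains two disjointly located unit vectors witnessing the non-strict-convexity (equivalently an isometric copy of $\ell_\infty^2$). Once this small lemma is isolated, both cases close immediately and together yield precisely the dichotomy $\dim(Y)=1$ or $X=Y=\ell_\infty^2$.
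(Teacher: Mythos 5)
Your proof is correct and follows essentially the same route as the paper: both arguments reduce the statement to the Kakutani--Bohnenblust classification of $1$-projective spaces and Kelley's classification of $1$-injective spaces, and both use the incompatibility of $\mathcal{C}(K)$ ($|K|\geq 2$) with Hilbert-space geometry to rule out $\dim(Y)\geq 2$ inside a Hilbert space. The only cosmetic difference is that you split on the two alternatives for $X$ at the outset, while the paper first uses reflexivity of $Y$ (valid in either alternative, since $X$ is always isomorphic to a Hilbert space) to identify $Y$ with some $\ell_\infty^n$ and then excludes the Hilbert case.
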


\begin{proof}
We will use the aformentioned characterizations of $1$-injective and $1$-projective spaces.

Assume $\dim(Y)>1$. Since $X$ is isomorphic to a Hilbert space, we deduce that $Y$ must be a ref\mbox{}lexive $\mathcal{C}(K)$ space and therefore isometrically isomorphic to $\ell_\infty^n$ for some $n>1$. This in turns implies that $X$ is not a Hilbert space, so it is necessarily $2$-dimensional and thus $Y=X=\ell_\infty^2$.
\end{proof}

A direct consequence of the previous proposition is the following corollary.

\begin{corollary} If $X$ is a $1$-projective, $1$-injective Banach space, then either $X=\ell_\infty^1$ or $X=\ell_\infty^2$.
\end{corollary}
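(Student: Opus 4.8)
The plan is to invoke the previous proposition in the degenerate case $Y=X$. Indeed, $X$ is trivially a closed subspace of itself, and by hypothesis it is $1$-injective; since $X$ is simultaneously assumed to be $1$-projective, the pair $(X,Y)=(X,X)$ fits exactly into the hypotheses of the proposition. Applying it therefore yields at once the dichotomy: either $\dim(X)=1$ or $X=\ell_\infty^2$.

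It then only remains to identify the first alternative. If $\dim(X)=1$, then $X$ is a one-dimensional normed space, and every such space is isometrically isomorphic to the scalar field equipped with the absolute value, i.e.\ to $\ell_\infty^1$. Hence $X=\ell_\infty^1$ in that case, and the two possibilities listed in the statement are exactly the two branches of the dichotomy.

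I do not expect any genuine obstacle here, as the corollary is a direct specialization of the proposition rather than a new argument. The only points requiring (minimal) care are the observation that $X$ itself qualifies as an admissible closed subspace $Y$, so that the proposition legitimately applies, together with the routine remark that all one-dimensional Banach spaces coincide isometrically with $\ell_\infty^1$.
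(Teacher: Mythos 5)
Your proof is correct and matches the paper's intent exactly: the paper states the corollary as a direct consequence of the preceding proposition, and your specialization to $Y=X$, together with the observation that every one-dimensional Banach space is isometrically $\ell_\infty^1$, is precisely that argument.
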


Notice that, in opposition to the previous proposition, there can be found $1$-injective Banach spaces containing $1$-projective Banach spaces.

\begin{example}
$\ell_\infty$ is a $1$-injective Banach space that contains an isometric copy of $\ell_2$, which is Hilbert and then $1$-projective.
\end{example}

\section{Suf\mbox{}f\mbox{}icient condition for selfextensionality}

Recall that the annihilator of a non-empty subset $M$ of a vector space is the set $M^\perp:=\{x^*\in X^*:M\subseteq \ker(x^*)\}$. The pre-annihilator of a non-empty subset $N$ of $X^*$ is def\mbox{}ined as $N^\top:=N^\perp\cap X=\bigcap_{n^*\in N}\ker(n^*)$.

We also remind the reader that $\mathsf{NA}(X)$ stands for the set of norm-attaining functionals on a normed space $X$.

\begin{remark}\label{key}
If $X$ and $Y$ are normed spaces and $T:X\to Y$ is linear and continuous, then $\overline{T(\mathsf{B}_X)}=\mathsf{B}_Y$ if and only if $T^*:Y^*\to X^*$ is an isometry. Now, let $X$ be a Banach space and $Y$ a $1$-complemented subspace of $X$. If $P:X\to Y$ is a $1$-projection, then $P^*(\mathsf{NA}(Y))\subseteq \mathsf{NA}(X)$ and $P(\mathsf{B}_X)=\mathsf{B}_Y$, therefore $P^*:Y^*\to X^*$ is an isometry, and hence if $\mathsf{NA}(Y)$ contains an inf\mbox{}inite dimensional vector subspace, then so does $\mathsf{NA}(X)$. Also, if $i:Y\hookrightarrow X$ denotes the inclusion, then $\ker(P)=\left((P^*\circ i^*)(X^*)\right)^\top$.
\end{remark}

We will make use of Remark \ref{key} to f\mbox{}ind a suf\mbox{}f\mbox{}icient condition for selfextensionality. Nevertheless, the previous remark applied to $1$-injective spaces has strong consequences relative to the linear structure of the norm-attaining functionals.

\begin{theorem}
Let $X$ be a Banach space. If $X$ contains an isometric copy $Y$ of an inf\mbox{}inite dimensional $1$-injective space, then $\mathsf{NA}(X)$ contains an inf\mbox{}inite dimensional vector subspace.
\end{theorem}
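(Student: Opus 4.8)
The plan is to reduce the statement to the subspace $Y$ by means of Remark \ref{key}, and then to exhibit an explicit infinite-dimensional subspace of norm-attaining functionals inside an infinite-dimensional $1$-injective space. First I would note that $1$-injectivity is an isometric invariant, so the isometric copy $Y \subseteq X$ is itself $1$-injective; since $Y$ is a subspace of $X$, this forces $Y$ to be $1$-complemented in $X$, giving a norm-one projection $P : X \to Y$. By Remark \ref{key} it then suffices to prove that $\mathsf{NA}(Y)$ contains an infinite-dimensional vector subspace, because the remark transports such a subspace into $\mathsf{NA}(X)$ through the isometry $P^*$.

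Next I would invoke Kelley's characterization to write $Y$ isometrically as $\mathcal{C}(K)$ with $K$ extremally disconnected and compact Hausdorff; since $Y$ is infinite-dimensional, $K$ is infinite. Identifying $Y^*$ with the space of regular Borel measures $M(K)$, I would consider the point masses $\delta_t$ for $t\in K$. Choosing a sequence of distinct points $t_1, t_2, \ldots \in K$, the corresponding $\delta_{t_i}$ are linearly independent, so their linear span $E$ is infinite-dimensional. The content of the argument is the claim that $E \subseteq \mathsf{NA}(\mathcal{C}(K))$: for $\mu = \sum_{i=1}^n a_i \delta_{t_i}$ with the $t_i$ distinct one has $\|\mu\| = \sum_{i=1}^n |a_i|$, and to attain this norm it is enough to produce $f \in \mathcal{C}(K)$ with $\|f\|_\infty \le 1$ and $f(t_i) = \overline{a_i}/|a_i|$ for each $i$ (the sign of $a_i$ in the real case), since then $\sum_{i=1}^n a_i f(t_i) = \sum_{i=1}^n |a_i|$.

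Such an interpolating function exists because finitely many points of a compact Hausdorff space can be separated by pairwise disjoint neighborhoods, on which one assembles a continuous function of modulus at most $1$ taking the prescribed unimodular values; in the extremally disconnected setting this construction is especially transparent, as the separating neighborhoods can be taken clopen. Granting the claim, $E$ is the desired infinite-dimensional subspace of $\mathsf{NA}(Y)$, and Remark \ref{key} completes the proof.

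The step I expect to be the main obstacle is precisely the verification that \emph{every} element of $E$ attains its norm: one must simultaneously prescribe finitely many unimodular values and keep the interpolating function inside the unit ball, and in the complex case the interpolation must respect the phases $\overline{a_i}/|a_i|$ rather than mere signs. Everything else is formal once this interpolation lemma is in hand.
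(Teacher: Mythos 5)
Your argument is correct, and its skeleton coincides with the paper's: both use that a $1$-injective subspace is automatically $1$-complemented, transport an infinite-dimensional subspace of $\mathsf{NA}(Y)$ into $\mathsf{NA}(X)$ via the isometry $P^*$ from Remark \ref{key}, and then reduce to showing that $\mathsf{NA}(\mathcal{C}(K))$ contains an infinite-dimensional subspace when $K$ is infinite, compact, Hausdorff and extremally disconnected. Where you diverge is in that last step: the paper simply cites \cite[Theorem 2.1]{AAAGP}, whereas you prove it from scratch by taking distinct points $t_1,t_2,\dots\in K$, spanning $E$ by the point masses $\delta_{t_i}$, and verifying $E\subseteq\mathsf{NA}(\mathcal{C}(K))$ via a Urysohn-type interpolation: disjoint open neighborhoods $U_i\ni t_i$, bump functions $g_i$ vanishing off $U_i$ with $g_i(t_i)=1$, and $f=\sum_i c_i g_i$ with $c_i=\overline{a_i}/|a_i|$, which satisfies $\|f\|_\infty\le 1$ because the supports are disjoint. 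This is a complete argument, and the step you flag as the ``main obstacle'' is in fact unproblematic exactly for this reason. What your route buys is self-containedness and a slightly sharper observation: extremal disconnectedness plays no role in this part of the proof (only in Kelley's identification $Y\cong\mathcal{C}(K)$); the interpolation needs only that $K$ is an infinite compact Hausdorff space. What the paper's route buys is brevity and a pointer to a more general result on the linear structure of norm-attaining functionals.
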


\begin{proof}
Since $Y$ is $1$-injective, $Y$ is $1$-complemented in $X$ so let $P:X\to Y$ be a $1$-projection. By Remark \ref{key} we have that $P^*(\mathsf{NA}(Y))\subseteq \mathsf{NA}(X)$. Since $Y$ is linearly isometric to a $\mathcal{C}(K)$ with $K$ compact, Hausdorf\mbox{}f, extremally disconnected and inf\mbox{}inite, we deduce that $\mathsf{NA}(Y)$ contains an inf\mbox{}inite dimensional vector subspace by virtue of \cite[Theorem 2.1]{AAAGP}. As a consequence, the condition $P^*(\mathsf{NA}(Y))\subseteq \mathsf{NA}(X)$ implies that $\mathsf{NA}(X)$ also contains an inf\mbox{}inite dimensional vector subspace in view of Remark \ref{key}.
\end{proof}

Since every equivalent norm on a subspace can be extended to an equivalent norm on the whole space, the previous theorem has an immediate corollary.

\begin{corollary}
Let $X$ be a Banach space. If $X$ contains an isomorphic copy $Y$ of an inf\mbox{}inite dimensional $1$-injective space, then $X$ can be equivalently renormed to make $\mathsf{NA}(X)$ contain an inf\mbox{}inite dimensional vector subspace.
\end{corollary}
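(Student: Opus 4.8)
The plan is to deduce this from the previous theorem by transferring a genuine $1$-injective norm onto $Y$ and then extending it to an equivalent norm on all of $X$, so that the renormed $X$ literally contains an isometric copy of an infinite-dimensional $1$-injective space.

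First I would fix a linear homeomorphism $\phi\colon E\to Y$, where $E$ is the infinite-dimensional $1$-injective space of which $Y$ is an isomorphic copy, and define a new norm on $Y$ by $|y|:=\|\phi^{-1}(y)\|_E$. By construction $(Y,|\cdot|)$ is isometrically isomorphic to $E$, hence it is itself $1$-injective (this being an isometric invariant), and since $\phi$ is an isomorphism the norm $|\cdot|$ is equivalent to the norm $Y$ inherits from $X$; in particular there are constants $0<a\leq b$ with $a\|y\|\leq|y|\leq b\|y\|$ for all $y\in Y$. Because $Y$ is complete for $|\cdot|$ (being isometric to the Banach space $E$), it is a closed subspace of $X$.

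The core of the argument is then to extend $|\cdot|$ to an equivalent norm on $X$. For a fixed constant $M\geq b$ I would set
$$q(x):=\inf\{\,|y|+M\|x-y\|:y\in Y\,\},$$
the infimal convolution of $|\cdot|$ (extended by $+\infty$ off $Y$) with $M\|\cdot\|$. This is at once a seminorm; taking $y=0$ gives $q(x)\leq M\|x\|$, while $a\|y\|+M\|x-y\|\geq\min(a,M)\|x\|$ yields $q(x)\geq\min(a,M)\|x\|>0$, so $q$ is an equivalent norm on $X$. For $x\in Y$ the choice $y=x$ gives $q(x)\leq|x|$, and the reverse inequality $|y|+M\|x-y\|\geq|x|-b\|x-y\|+M\|x-y\|\geq|x|$ (valid since $M\geq b$) shows that $q|_Y=|\cdot|$.

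Thus $(Y,|\cdot|)$ is an isometric copy of the infinite-dimensional $1$-injective space $E$ sitting inside $(X,q)$, and applying the previous theorem to $(X,q)$ gives that $\mathsf{NA}(X,q)$ contains an infinite-dimensional vector subspace, as desired. The only nontrivial ingredient is the norm-extension step, which is handled cleanly by the infimal-convolution formula above; everything else is a direct transfer of structure along the isomorphism $\phi$.
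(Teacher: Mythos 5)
Your argument is correct and follows the same route as the paper, which simply invokes the fact that an equivalent norm on a subspace extends to an equivalent norm on the whole space and then applies the preceding theorem. Your infimal-convolution construction of $q$ is a clean, explicit verification of that extension step (including the check that $q|_Y=|\cdot|$ and that $Y$ is closed), so it fills in the detail the paper leaves implicit rather than taking a genuinely different path.
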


Remark \ref{key} also motivates the following def\mbox{}inition. Notice that, if $Z\subseteq X^*$, then $Z^\top= \{0\}$ means that $X^*=\{0\}^\perp=\left(Z^\top\right)^\perp=\mathrm{cl}_{w^*}(Z)$. Conversely, if $\mathrm{cl}_{w^*}(Z)=X^*$, then $\left(Z^\top\right)^\perp=\mathrm{cl}_{w^*}(Z)=X^*$, so the Hahn-Banach Theorem assures that $Z^\top=\{0\}$.

\begin{definition}
Let $X$ be a Banach space and $Y$ a closed subspace of $X$. A $w^*$-closed subspace $Z$ of $X^*$ is said to be norm-attaining for $Y$ provided that for every $y\in \mathsf{S}_Y$ there is an element $z^*\in \mathsf{S}_Z$ such that $z^*(y)=1$. If $Z^\top\neq \{0\}$, then we will say that $Z$ is nontrivial.
\end{definition}

\begin{example}\label{nme}
Let $X$ be a Banach space. Let $y\in X$ with $\|y\|=1$ and $y^*\in\mathsf{S}_{X^*}$ so that $y^*(y)=1$. Note that     $Y:=\mathbb{K} y$ is a $1$-dimensional subspace of  $X$ and  $Z:=\mathbb{K} y^*$ is norm-attaining for $Y$ with $Z^{\top}=\ker(y^*)\neq \{0\}$. Notice also that the natural projection $P:\mathbb{K} y\oplus \ker(y^*)\to \mathbb{K} y$ has norm $1$.
\end{example}

The following theorem characterizes the existence of nontrivial norm-attaining subspaces. It is based upon the ideas provided by Remark \ref{key} and Example \ref{nme}.

\begin{theorem}\label{char}
Let $X$ be a Banach space. Let $Y$ be a closed subspace of $X$. If there exists  $Z \subseteq X^*$ norm-attaining for $Y$, then:
\begin{enumerate}
\item $Y\cap Z^{\top}=\{0\}$.
\item $\|y\|\leq \|y+z\|$ for all $y\in Y$ and all $z\in Z^\top$.
\item $Y+Z^\top$ is closed in $X$.
\item If, in addition,  $Y^\perp\cap Z=\{0\}$, then $Y+Z^\top=X$.
\item If, in addition, $Y$ is smooth, then $\mathsf{NA}(Y)\subseteq i^*(Z)$ and thus $(i^*)^{-1}(i^*(Z))$ is dense in $X^*$, where $i: Y\hookrightarrow X$ is the canonical inclusion.
\end{enumerate}
Conversely, if there exists a closed subspace $V\neq \{0\}$ of $X$ such that $Y\cap V =\{0\}$ and the natural projection $Y+ V \to Y$ has norm $1$, then there exists a nontrivial norm-attaining $Z \subseteq X^*$ for $Y$ with $V\subseteq Z^\top$ and $V=Z^\top\cap (Y+V)$. If, in addition, $Y+V=X$, then $Y^\perp \cap Z=\{0\}$.
\end{theorem}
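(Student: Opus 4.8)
The plan is to work directly from the defining property: for each $y\in\mathsf{S}_Y$ there is a norming $z^*\in\mathsf{S}_Z$ with $z^*(y)=1$, and since $z^*$ annihilates $Z^\top$ this functional separates $Y$ from $Z^\top$. I would prove (1)--(3) together, with (2) as the workhorse. For (2), given $0\neq y\in Y$ and $z\in Z^\top$, choose $z^*\in\mathsf{S}_Z$ with $z^*(y/\|y\|)=1$; then $z^*(z)=0$, so $\|y\|=z^*(y)=z^*(y+z)\le\|y+z\|$. Statement (1) is then the special case $z=-y$: for $y\in Y\cap Z^\top$ we get $\|y\|\le\|y-y\|=0$. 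From (1) the sum $Y+Z^\top$ is algebraically direct, and (2) says the natural projection onto $Y$ is contractive; passing a Cauchy sequence $y_n+z_n$ through $\|y_n-y_m\|\le\|(y_n+z_n)-(y_m+z_m)\|$ shows $(y_n)$, and hence $(z_n)$, converges, which gives (3) because $Y$ and $Z^\top=\bigcap_{z^*\in Z}\ker(z^*)$ are closed.

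For (4), assume $Y^\perp\cap Z=\{0\}$. Since $Y+Z^\top$ is closed by (3), if it were proper then Hahn-Banach would supply a nonzero $f\in X^*$ annihilating it, i.e. $f\in Y^\perp\cap(Z^\top)^\perp$. But $Z$ is $w^*$-closed, so $(Z^\top)^\perp=\mathrm{cl}_{w^*}(Z)=Z$ (the duality already recorded before the definition), whence $f\in Y^\perp\cap Z=\{0\}$, a contradiction. For (5), write $i^*:X^*\to Y^*$ for the restriction map. If $g\in\mathsf{NA}(Y)$ is norm-one and attains at $y\in\mathsf{S}_Y$, pick $z^*\in\mathsf{S}_Z$ with $z^*(y)=1$; then $i^*(z^*)=z^*|_Y$ has norm $1$ and attains at $y$, so smoothness of $Y$ forces $i^*(z^*)=g$, giving $\mathsf{NA}(Y)\subseteq i^*(Z)$. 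Now $(i^*)^{-1}(i^*(Z))=Z+Y^\perp$, and since $i^*$ is a metric quotient map, density of $i^*(Z)$ in $Y^*$ transfers to density of $Z+Y^\perp$ in $X^*$; density of $i^*(Z)\supseteq\mathsf{NA}(Y)$ in $Y^*$ is exactly the Bishop-Phelps theorem.

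For the converse I would manufacture $Z$ out of the norm-one projection. Let $P:Y+V\to Y$ be the given contractive projection. For each $y\in\mathsf{S}_Y$ choose (Hahn-Banach in $Y$) a norm-one $g_y\in Y^*$ with $g_y(y)=1$; then $g_y\circ P$ is a functional on $Y+V$ of norm $\le\|g_y\|\,\|P\|=1$, which equals $1$ because it restricts to $g_y$ on $Y$, and which kills $V$. Extend it by Hahn-Banach to $\widehat g_y\in\mathsf{S}_{X^*}$ with $\widehat g_y|_Y=g_y$ and $\widehat g_y|_V=0$, and set $Z:=\mathrm{cl}_{w^*}(\mathrm{span}\{\widehat g_y:y\in\mathsf{S}_Y\})$. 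Each $\widehat g_y\in Z\cap\mathsf{S}_{X^*}$ satisfies $\widehat g_y(y)=1$, so $Z$ is norm-attaining for $Y$; since every $\widehat g_y\in V^\perp$ and $V^\perp$ is $w^*$-closed, $Z\subseteq V^\perp$, i.e. $V\subseteq Z^\top$, and $V\neq\{0\}$ gives nontriviality. The inclusion $Z\subseteq V^\perp$ yields $V\subseteq Z^\top\cap(Y+V)$, and conversely if $y+v\in Z^\top$ then every $\widehat g_y$ annihilates it, forcing the $Y$-component to vanish by the norming property, so $Z^\top\cap(Y+V)=V$. Finally, if $Y+V=X$ then any $f\in Y^\perp\cap Z$ vanishes on $Y$ and on $V$ (the latter since $Z\subseteq V^\perp$), hence on all of $X$, so $f=0$.

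The inequalities (1)--(3) and the closedness argument are routine; the two places needing genuine input are (5), where one must recognize that the density conclusion is the Bishop-Phelps theorem pushed through the quotient map $i^*$, and the converse, whose main obstacle is arranging the candidate $Z$ to be \emph{simultaneously} $w^*$-closed, contained in $V^\perp$, and norm-attaining for $Y$. This is precisely what the norm-one projection $P$ buys: it lets each $g_y\circ P$ be extended to $X$ without increasing its norm, so the generators of $Z$ land in $\mathsf{S}_{X^*}\cap V^\perp$ while still attaining the value $1$ on the prescribed unit vector.
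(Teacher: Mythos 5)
Your argument is correct. Items (1)--(3) and (5) are essentially the paper's own proof: the same norming-functional inequality $\|y\|=z^*(y)=z^*(y+z)\le\|y+z\|$, the same Cauchy-sequence argument for closedness, and the same use of smoothness plus Bishop--Phelps (the paper invokes openness of $i^*$ via the Open Mapping Theorem where you invoke the metric-quotient property; these are interchangeable here). For (4) you argue by contradiction with Hahn--Banach where the paper computes $(Y+Z^\top)^\perp=Y^\perp\cap\mathrm{cl}_{w^*}(Z)=Y^\perp\cap Z=\{0\}$ directly, but the content is identical. The one genuine divergence is the converse: the paper takes the canonical choice $Z:=(i^*)^{-1}\bigl(P^*(Y^*)\bigr)$, whose $w^*$-closedness comes for free because $P^*(Y^*)$ is the range of a $w^*$-continuous projection and $i^*$ is $w^*$-$w^*$ continuous, and then verifies the norming property by extending $P^*(y^*)$ via Hahn--Banach; you instead generate $Z$ as the $w^*$-closed span of one norm-preserving extension $\widehat g_y$ of $g_y\circ P$ per unit vector $y\in\mathsf{S}_Y$, imposing $w^*$-closedness by taking the closure and then checking the closure stays inside the $w^*$-closed set $V^\perp$. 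Both constructions exploit exactly the same mechanism (compose with the contractive projection, extend without increasing the norm), and your verifications of $V=Z^\top\cap(Y+V)$ and of $Y^\perp\cap Z=\{0\}$ when $Y+V=X$ are sound --- the latter is in fact cleaner than the paper's, which identifies $Z$ with $P^*(X^*)$ and $Y^\perp$ with $\ker(P^*)$. The paper's canonical $Z$ has the mild advantage of not depending on arbitrary choices of supporting functionals and extensions, but the theorem only asserts existence, so nothing is lost.
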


\begin{proof}
Recall that $Z$ is $w^*$-closed and every $y\in \mathsf{S}_Y$ has an element $z^*\in \mathsf{S}_Z$ such that $z^*(y)=1$.
\begin{enumerate}

\item Take any $y\in Y\cap Z^\top$. Since $y\in Y$, there exists $z^*\in\mathsf{S}_Z$ with $z^*(y)=\|y\|$. Since $y\in Z^\top$, $z^*(y)=0$. As a consequence, $y=0$.

\item Fix any arbitrary $y\in Y,z\in Z^\top$. Since $Z$ is norm-attaining for $Y$, there exists $z^*\in\mathsf{S}_Z$ with $z^*(y)=\|y\|$. Note that $z^*(z)=0$. Then $$\|y\|=z^*(y)=z^*(y+z)\leq \|y+z\|.$$

\item Let $(y_n+z_n)_{n\in\mathbb{N}}$ be a sequence in $Y+Z^\top$ converging to some $x\in X$. By (2), $$\|y_p-y_q\|\leq \|(y_p-y_q)+(z_p-z_q)\|=\|(y_p+z_p)-(y_q+z_q)\|$$ for all $p,q\in\mathbb{N}$, therefore $(y_n)_{n\in\mathbb{N}}$ is a Cauchy sequence in $Y$, thus convergent  to some $y\in Y$. Since $(y_n+z_n)_{n\in\mathbb{N}}$ is convergent to $x$, we deduce that $(z_n)_{n\in\mathbb{N}}$ is convergent to $x-y$, so $x-y\in Z^\top$. Finally, $x=y+(x-y)\in Y+Z^\top$.

\item Observe that $(Y+Z^\top)^\perp =Y^\perp\cap \left(Z^\top\right)^\perp = Y^\perp\cap \overline{Z}^{w^*}=Y^\perp \cap Z=\{0\}$, which implies that $Y+Z^\top=\mathrm{cl}\left(Y+Z^\top\right)=\left((Y+Z^\top)^\perp\right)^\top =\{0\}^\top =X$. 

\item Let $y^*\in \mathsf{NA}(Y)\setminus\{0\}$ and f\mbox{}ind $y\in\mathsf{S}_Y$ with $y^*(y)=\|y^*\|$. By hypothesis, there exists $z^*\in \mathsf{S}_Z$ with $z^*(y)=1$. Since $Y$ is smooth, we deduce that $\frac{y^*}{\|y^*\|}=z^*|_Y=i^*(z^*)$. Thus $y^*=i^*(\|y^*\|z^*)\in i^*(Z)$. This shows that $\mathsf{NA}(Y)\subseteq i^*(Z)$. Finally, $(i^*)^{-1}(\mathsf{NA}(Y))\subseteq (i^*)^{-1}(i^*(Z))$. Since $i^*:X^*\to Y^*$ is continuous and onto between Banach spaces, it must be open by the Open Mapping Theorem, therefore $(i^*)^{-1}(\mathsf{NA}(Y))$ is dense in $X^*$ because $\mathsf{NA}(Y)$ is dense in $Y^*$ by the Bishop-Phelps Theorem.

\end{enumerate}

Assume now that there exists a closed subspace $V\neq \{0\}$ of $X$ such that $Y\cap V =\{0\}$ and the natural projection $Y\oplus V \to Y$ has norm $1$. Let $P:Y+ V \to Y$ denote the natural projection and $i:Y+V\hookrightarrow X$ the natural injection. We will take $Z:=(i^*)^{-1}\left(P^*(Y^*)\right)$. We have to show that $Z$ is a nontrivial norm-attaining subspace for $Y$. We will follow several steps:
\begin{itemize}

\item $Z$ is $w^*$-closed in $X^*$. Indeed, $P^*:Y^*\to (Y+V)^*$ is $w^*$-$w^*$ continuous and is a projection, so $P^*(Y^*)$ is $w^*$-closed in $(Y+V)^*$. Next, $i^*:X^*\to (Y+V)^*$ is also $w^*$-$w^*$ continuous, so $(i^*)^{-1}\left(P^*(Y^*)\right)$ is $w^*$-closed in $X^*$.

\item Every $y\in \mathsf{S}_Y$ has a $z^* \in \mathsf{S}_{Z}$ such that $z^*(y)=1$. Indeed, f\mbox{}ix an arbitrary $y\in \mathsf{S}_Y$. By the Hahn-Banach Theorem there exists $y^*\in\mathsf{S}_{Y^*}$ with $y^*(y)=1$. Notice that $P^*(y^* )(y)=y^*(P(y))=y^*(y)=1$ and $\|P^*(y^*)\|= 1$ since $\|P^*\|=\|P\|=1$. In accordance with the Hahn-Banach Extension Theorem we can f\mbox{}ind $z^* \in \mathsf{S}_{X^*}$ such that $i^*(z^*)=z^*|_{Y+V} = P^*(y^* )$. Now observe that $z^*(y)=P^*(y^*)(y)=1$ and $z^*\in Z$.

\item $V\subseteq Z^\top$. Indeed, if $v\in V$ and $z^*\in Z$, then there exists $y^*\in Y^*$ with $i^*(z^*)=P^*(y^*)$ and hence $z^*(v)=P^*(y^*)(v)=y^*(P(v))=y^*(0)=0$. As a consequence, $V\subseteq Z^\top$.

\item $V=Z^\top\cap (Y+V)$. Indeed, let $y+v\in Z^\top$ and f\mbox{}ix an arbitrary $y^*\in Y^*$. Using again the Hahn-Banach Theorem we can f\mbox{}ind $z^*\in X^*$ such that $i^*(z^*)=z^*|_{Y+V}= P^*(y^*)$, then $z^*\in Z$ and hence  $y^*(y)=y^*(P(y+v))=P^*(y^*)(y+v)=z^*(y+v)=0$. The arbitrariness of $y^*$ implies that $y=0$.

\item $Y^\perp \cap Z=\{0\}$ provided that $Y+V=X$. Indeed, in this situation, the natural inclusion $i:Y+V\hookrightarrow X$ is precisely the identity on $X$. Therefore, if we consider the $1$-projection $P:X\to X$ such that $P(X)=Y$ and $\ker(P)=V$, then $Z=P^*(X^*)$. It is clear that $Y^\perp =(I-P^*)(X^*)=\ker(P^*)$, hence $Y^\perp\cap Z=\{0\}$.

\end{itemize}

\end{proof}

As a consequence of Theorem \ref{char}, we obtain a characterization for a closed subspace to be $1$-complemented.

\begin{corollary}
Let $X$ be a Banach space. If $Y$ is a closed subspace of $X$, then there exists   $Z \subseteq X^*$ norm-attaining with $Y^\perp\cap Z=\{0\}$ if and only if $Y$ is $1$-complemented in $X$.
\end{corollary}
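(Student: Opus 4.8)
The plan is to read off both implications directly from Theorem~\ref{char}, treating this corollary as a repackaging of the parts of that theorem and its converse half. The forward implication will follow uniformly from parts (1), (2) and (4); the converse is where care is needed, because the converse half of Theorem~\ref{char} presupposes a genuine complement $V\neq\{0\}$ together with a projection of norm exactly $1$, so the degenerate cases $Y=\{0\}$ and $Y=X$ must be handled by hand there.

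For the forward implication, I would assume there is $Z\subseteq X^*$ norm-attaining for $Y$ with $Y^\perp\cap Z=\{0\}$. Part (1) of Theorem~\ref{char} gives $Y\cap Z^\top=\{0\}$, while part (4), whose standing hypothesis $Y^\perp\cap Z=\{0\}$ is exactly what we have assumed, gives $Y+Z^\top=X$. Hence $X=Y\oplus Z^\top$ as an internal direct sum, and I would define $P:X\to Y$ to be the linear projection along $Z^\top$, so that $P(y+z)=y$. Part (2), the inequality $\|y\|\le\|y+z\|$ for $y\in Y$ and $z\in Z^\top$, says precisely that $\|P(y+z)\|=\|y\|\le\|y+z\|$, whence $\|P\|\le 1$; thus $P$ is a $1$-projection onto $Y$ and $Y$ is $1$-complemented. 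This argument is uniform, covering also $Y=\{0\}$ (where $P$ is the zero map). Note that part (3) is not needed separately, since part (4) already identifies $Y+Z^\top$ with all of $X$.

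For the converse, suppose $Y$ is $1$-complemented, say $P:X\to Y$ is a projection with $\|P\|\le 1$, and set $V:=\ker(P)$, a closed subspace with $Y\cap V=\{0\}$ and $Y+V=X$. If $\{0\}\neq Y\neq X$, then $V\neq\{0\}$, and since every nonzero projection has norm at least $1$ we get $\|P\|=1$; as $P$ is exactly the natural projection $Y+V\to Y$, the converse part of Theorem~\ref{char} applies and produces a $w^*$-closed norm-attaining $Z\subseteq X^*$ for $Y$ which, because $Y+V=X$, additionally satisfies $Y^\perp\cap Z=\{0\}$. It then remains to dispose of the two extreme cases: for $Y=X$ I would take $Z=X^*$, which is $w^*$-closed and norm-attaining for $Y$ by Hahn--Banach, with $Y^\perp=\{0\}$ so that $Y^\perp\cap Z=\{0\}$; for $Y=\{0\}$ I would take $Z=\{0\}$, which is vacuously norm-attaining for $Y$ (there being no unit vectors in $Y$) and meets $Y^\perp=X^*$ only in $\{0\}$.

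The step I expect to demand the most attention is not any single computation but the bookkeeping of these boundary cases: the converse half of Theorem~\ref{char} is literally inapplicable when $V=\{0\}$ (that is, $Y=X$) or when the projection is the zero map (that is, $Y=\{0\}$), so those two situations must be verified directly as above. Once they are isolated, everything else is a direct invocation of the already-established parts of Theorem~\ref{char}.
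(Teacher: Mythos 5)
Your argument is correct and is exactly the paper's intended route: the paper states this corollary without proof as a direct consequence of Theorem~\ref{char}, and your unpacking of parts (1), (2), (4) for the forward direction and of the converse half (with $V=\ker(P)$) for the reverse is the natural way to do it. Your explicit treatment of the degenerate cases $Y=\{0\}$ and $Y=X$ is a welcome extra precaution rather than a deviation.
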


The previous corollary provides a characterization of $1$-projective spaces, hence  a suf\mbox{}f\mbox{}icient condition for self-extensionality.

\begin{scholium}
A Banach space $X$ is $1$-projective if and only if every closed subspace $Y$ of $X$ has a norm-attaining $Z \subseteq X^*$  with $Y^\perp\cap Z=\{0\}$. In particular,  $X$  satisf\mbox{}ies the self-extension property.
\end{scholium}

\section{Classic Banach spaces and self-extension}

We already know that the $n$-dimensional Banach spaces $\mathbb{R}^n_2$ and $\mathbb{R}^n_\infty$ have the self-extension property, since $\mathbb{R}^n_2$ is $1$-projective and $\mathbb{R}^n_\infty$ is $1$-injective. Note that $\mathbb{R}^3_1$ is neither $1$-injective nor $1$-projective, however in the next result we show that it has the self-extension property.

\begin{theorem}\label{R3}
$\mathbb{R}^3_1$ satisf\mbox{}ies the self-extension property.
\end{theorem}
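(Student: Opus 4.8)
The plan is to argue by cases on $\dim Y$, the only substantial case being $\dim Y=2$. When $\dim Y\in\{0,3\}$ the extension is immediate ($S=0$ or $S=T$), and when $\dim Y=1$, say $Y=\mathbb{R}y$ with $\|y\|_1=1$, I would pick by Hahn--Banach a norm-one functional $f$ with $f(y)=1$ and use the norm-one projection $P=y\otimes f$ (that is, $P(x)=f(x)y$), extending via $S=T\circ P$ exactly as in the $1$-projective branch of Theorem~\ref{thsuffauto}. So fix a two-dimensional $Y=\ker f$ with $0\neq f=(f_1,f_2,f_3)$, and split according to how many coordinates of $f$ vanish; after composing with coordinate sign changes, which are isometries of $\mathbb{R}^3_1$, I may assume the nonzero coordinates are positive. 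If two coordinates vanish, $Y$ is a coordinate plane and the corresponding coordinate projection has norm one. If exactly one vanishes, say $f_3=0$, then $Y=\mathbb{R}e_3\oplus\ell$ with $\ell$ a line in the $e_1e_2$-plane; since every two-dimensional space is $1$-projective, $\ell$ is $1$-complemented in $\mathbb{R}^2_1$ by some $P_0$, and $P_0\oplus\mathrm{id}_{\mathbb{R}e_3}$ is a norm-one projection onto $Y$. In both cases $Y$ is $1$-complemented and $T$ extends isometrically by $S=T\circ P$, again as in Theorem~\ref{thsuffauto}.

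The genuinely new case is $f_1,f_2,f_3>0$, where $Y$ is a (generally non-$1$-complemented) hexagonal central section of the octahedron. Here I would not look for a projection but parametrize all extensions directly. Writing operators as matrices and recalling that the operator norm on $\mathbb{R}^3_1$ is the maximal absolute column sum, every extension of $T$ has the form $A=A_0+w\otimes f$, where $A_0$ is one fixed extension and $w\in\mathbb{R}^3$ is free, because the matrices vanishing on $\ker f$ are exactly the operators $x\mapsto f(x)w$. The $j$-th column of $A$ is $c^{(j)}+f_jw$, where $c^{(j)}$ is the $j$-th column of $A_0$, so writing $p^{(j)}:=-c^{(j)}/f_j$ gives
\[\|S\|=\|A\|=\max_{j}\,f_j\,\bigl\|w-p^{(j)}\bigr\|_1 .\]
Thus the optimal extension has norm equal to the value of a weighted $\ell_1$ $1$-center problem for the three points $p^{(1)},p^{(2)},p^{(3)}$ with weights $f_j$, and the task is to show this value equals $\|T\|$.

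To identify $\|T\|$, I would use that $\mathsf{B}_Y$ is the hexagon with vertices $\pm P_{ij}$, where $P_{ij}=\tfrac{1}{f_i+f_j}(f_je_i-f_ie_j)$, so that $\|T\|=\max_{i\neq j}\|TP_{ij}\|_1$. A short computation yields $TP_{ij}=\tfrac{f_if_j}{f_i+f_j}\bigl(p^{(j)}-p^{(i)}\bigr)$, whence
\[\|T\|=\max_{i\neq j}\frac{f_if_j}{f_i+f_j}\,\bigl\|p^{(i)}-p^{(j)}\bigr\|_1 .\]
Since restriction already forces $\|S\|\ge\|T\|$, the proof reduces to producing a single $w$ with $f_j\|w-p^{(j)}\|_1\le\|T\|$ for every $j$; equivalently, the three closed balls $\bar B_1\!\bigl(p^{(j)},\,\|T\|/f_j\bigr)$ of $\mathbb{R}^3_1$ must have a common point. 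The displayed value of $\|T\|$ is exactly what makes these balls pairwise intersecting, and the three centres are coplanar, since $p^{(i)}-p^{(j)}$ is a scalar multiple of $TP_{ij}\in Y$.

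The main obstacle is precisely this final step: showing that three pairwise-intersecting closed balls of $\mathbb{R}^3_1$ whose centres are coplanar have nonempty common intersection. Unlike $\ell_\infty$, the space $\ell_1$ is not hyperconvex, so this cannot be read off from a binary intersection property and is a genuine low-dimensional phenomenon; I would prove it by a direct analysis in $\mathbb{R}^3$, describing the pairwise intersection of two octahedra explicitly (as in the tight symmetric configurations, where the pairwise intersection collapses to a segment) and then checking that the third ball meets it, or alternatively by an LP/Helly argument exploiting $\dim X=3$ together with the coplanarity of the centres. This is exactly the point one expects to fail for $\mathbb{R}^4_1$, where the analogous reduction produces four balls in $\mathbb{R}^4$ whose intersection can be empty, consistent with the failure of the self-extension property established later for $\mathbb{R}^4_1$.
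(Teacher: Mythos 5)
Your reduction is sound and is in substance the same as the paper's: after disposing of the $1$-complemented cases, both arguments boil down to producing a single vector $w$ (the image of a vector outside $Y$) lying simultaneously in three prescribed closed $\ell_1$-balls, whose centres are images under $T$ of suitable vectors and whose radii are calibrated so that $\|T\|\leq 1$ makes the balls \emph{pairwise} intersecting; the paper's $\alpha,\beta,\gamma$ with radii $\mu_2,\mu_3,\mu_4$ are your $p^{(j)}$ with radii $\|T\|/f_j$ up to normalisation. The problem is that you stop exactly where the real work begins. You correctly identify that everything reduces to showing that three pairwise-intersecting closed balls of $\mathbb{R}^3_1$ have a common point, but you only list candidate strategies (``direct analysis'', ``LP/Helly'') without carrying any of them out. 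This is not a routine verification: a naive Helly argument cannot close it (three pairwise-intersecting convex sets, even in the plane, can have empty intersection), and the coplanarity of the centres that you lean on is neither sufficient by itself nor, as it turns out, needed.

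The paper closes precisely this gap with an explicit construction: it forms the coordinatewise ``median'' $m=\alpha-z$ of the three centres (the vector $z$ defined case-by-case in each coordinate), for which $\|\alpha-m\|+\|m-\beta\|=\|\alpha-\beta\|$ and similarly for the other two pairs, and then moves from $\alpha$ towards $m$ by the amount $\min\{\mu_2,\|z\|\}$; a short computation using the ordering $s_2\geq s_3\geq s_4$ of the slacks shows the resulting point lies in all three balls. This is Lima's weak intersection property $R_3$ (equivalently, the $3.2.I.P.$ of Lindenstrauss), which holds in \emph{every} $L_1(\mu)$ --- so, contrary to your closing remark, the three-ball statement is not a low-dimensional phenomenon and does not depend on coplanarity; what is dimension-dependent is only that your reduction for $\mathbb{R}^n_1$ produces $n$ balls, and $L_1$ fails the analogous four-ball property, which is consistent with the failure for $\mathbb{R}^4_1$. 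To complete your proof you must either reproduce the median construction or invoke the $3.2.I.P.$ of $L_1$ with a proper reference.
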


\begin{proof}
Let $X = \mathbb{R}^3_1$ and consider an hyperplane $Y \subseteq X$ and a continuous linear operator $T: Y \to Y$. Without loss of generality, we can assume that $e_1 \notin Y$ and $\|T\| = 1$. There exist $r_2, r_3 \in \mathbb{R}$ such that $r_2e_1 - e_2, r_3e_1 - e_3 \in Y$. $r_2 = r_3 = 0$ constitutes a trivial case. If one assumes $r_2 = 0$ and $r_3 \neq 0$, it is enough to def\mbox{}ine $\beta = T(e_1-\frac{1}{r_3}e_3)$ and $S: X \to X$ given by 

$$Se_1 = \frac{|r_3|}{1+|r_3|} \beta$$
$$Se_2 = Te_2$$
$$Se_3 = \frac{-r_3}{1+|r_3|} \beta$$

and it is elementary to conclude that $\|S\| = 1$ and $S$ extends $T$. Consequently, we can assume $r_2\neq0$, $r_3\neq 0$ and def\mbox{}ine $\alpha = T(e_1 - \frac{1}{r_2}e_2)$, $\beta = T(e_1 - \frac{1}{r_3}e_3)$. In order to obtain a more symmetric structure, let $\gamma = 0$, $\mu_2 = \frac{1}{|r_2|}$, $\mu_3 = \frac{1}{|r_3|}$, $\mu_4 = 1$. There exist $s_2, s_3, s_4 \geq 0$ satisfying

$$\|\alpha - \beta\| = \mu_2 + \mu_3 - s_4$$
$$\|\alpha - \gamma\| = \mu_2 + \mu_4 - s_3$$
$$\|\beta - \gamma\| = \mu_3 + \mu_4 - s_2$$

By symmetry, we can assume $s_2 \geq s_3 \geq s_4$. Def\mbox{}ine $z \in X$  as follows\footnote{This expression is suggested by the $R_3$ property in \cite[Page 18]{Lima1977}, which was in turn based on the $3.2.I.P.$ \mbox{def\mbox{}ined in \cite{Lind1964}}.}

$$z_i = \left\{
\begin{array}{lr}
0 & \textrm{if } (\alpha_i-\beta_i)(\alpha_i-\gamma_i) \leq 0 \\
 \alpha_i - \max\{\beta_i, \gamma_i\} & \textrm{if } \alpha_i > \max\{\beta_i, \gamma_i\}\\
 \alpha_i - \min\{\beta_i, \gamma_i\}& \textrm{if } \alpha_i < \min\{\beta_i, \gamma_i\}\\
\end{array}
\right.$$

If we additionally def\mbox{}ine $u = \alpha - \beta - z$, $v = \alpha - \gamma - z$, a computation shows that

$$\|z\| = \frac{1}{2}(\|\alpha-\beta\| + \|\alpha-\gamma\| - \|\beta-\gamma\|) = \mu_2 + \frac{1}{2}(s_2-s_3-s_4)$$
$$\|u\| = \frac{1}{2}(\|\alpha-\beta\| + \|\beta-\gamma\| - \|\alpha-\gamma\|) = \mu_3 + \frac{1}{2}(s_3-s_2-s_4)$$
$$\|v\| = \frac{1}{2}(\|\alpha-\gamma\| + \|\beta-\gamma\| - \|\alpha-\beta\|) = \mu_4 + \frac{1}{2}(s_4-s_2-s_3)$$

Let $w = \alpha - \min\left\{\frac{\mu_2}{\|z\|},1\right\} z$ (we convene that $\frac{\mu_2}{\|z\|} = +\infty$ when $z=0$). Then

$$\|w - \alpha\| = \min\left\{\frac{\mu_2}{\|z\|},1\right\} \|z\| \leq \mu_2$$
$$\|w - \beta\| = \|u + \max\left\{1-\frac{\mu_2}{\|z\|},0\right\} z\| \leq \|u\| + \max\{\|z\|-\mu_2,0\} = \mu_3+\max\left\{-s_4,\frac{s_3-s_2-s_4}{2}\right\} \leq \mu_3$$
$$\|w - \gamma\| = \|v + \max\left\{1-\frac{\mu_2}{\|z\|},0\right\} z\| \leq \|v\| + \max\{\|z\|-\mu_2,0\} = \mu_4+\max\left\{-s_3,\frac{s_4-s_2-s_3}{2}\right\} \leq \mu_4$$

We have proven that, by def\mbox{}ining $S: X \to X$ as the extension of $T$ satisfying $Se_1 = w$, then

$$\|Se_2\| = |r_2|\|Se_1 - \alpha\| \leq 1$$
$$\|Se_3\| = |r_3|\|Se_1 - \beta\| \leq 1$$
$$\|Se_1\| = \|Se_1 - \gamma\| \leq 1$$

which implies that  $\|S\| = 1$.

\end{proof}

However, the self-extension property does not hold for general $\mathbb{R}^n_1$ as the following example shows. For simplicity, we denote $u_k = e_1 - e_k$.

\begin{example}[{\bf $\mathbb{R}^4_1$ fails the self-extension property}]\label{R4}

Let $X = \mathbb{R}^4_1$ and consider $Y = \{x \in X: \sum_{i=1}^4 x(i)=0\}$ generated by the basis $(u_2,u_3,u_4)$. Let $T: Y \to Y$ be given, in terms of such basis, by the following matrix:
$$M_T = \frac{1}{2}\left(
\begin{array}{rrr}
 1 &  1 &  1 \\
 1 & -1 & -1 \\
-1 &  1 & -1 \\
\end{array}\right)$$
Next, it is easy to show that the extreme points of $B_Y$ are exactly $$E_Y:=\left\{\frac{1}{2}(u_i - u_j): i,j \in\{1,2,3,4\}, i \neq j\right\}$$ and from this
$$\|T\| = \max\{\|Tv\|: v \in E_Y\} = 1$$

Now suppose that $S: X \to X$ extends $T$ and let $z=Se_1$. Taking into account that $\|Se_k\| = \|Se_1 - Su_k\| = \|z - Tu_k\|$ for $k \in \{2,3,4\}$, the following holds:
\begin{itemize}
\item $\|Se_1\| = \|z\| \geq z(1)-z(2)+z(3)+z(4)$
\item $\|Se_2\| = \|z - \frac{1}{2}(1,-1,-1,1)\| \geq \frac{1}{2}-z(1)+\frac{1}{2}+z(2)+\frac{1}{2}+z(3)+\frac{1}{2}-z(4)$
\item $\|Se_3\| = \|z - \frac{1}{2}(1,-1,1,-1)\| \geq \frac{1}{2}-z(1)+\frac{1}{2}+z(2)+\frac{1}{2}-z(3)+\frac{1}{2}+z(4)$
\item $\|Se_4\| = \|z - \frac{1}{2}(-1,-1,1,1)\| \geq \frac{1}{2}+z(1)-\frac{1}{2}-z(2)+\frac{1}{2}-z(3)+\frac{1}{2}-z(4)$
\end{itemize}

The sum of the four inequalities gives us $\|Se_1\| + \|Se_2\| + \|Se_3\|  +\|Se_4\| \geq 5$ implying $\|S\| \geq \frac{5}{4}$ and excluding a norm-one extension of $T$.

\end{example}

The following two examples behave similarly to example \ref{R4}, so their properties are stated without proof. They will be used in the next section.

\begin{example}\label{R5}
Let $X = \mathbb{R}^5_1$ and consider $Y = \{x \in X: \sum_{i=1}^5 x(i)=0\}$ generated by the basis $(u_2,u_3,u_4,u_5)$. Let $T: Y \to Y$ be given, in terms of such basis, by the following matrix:
$$M_T = \frac{1}{2}\left(
\begin{array}{rrrr}
 1 &  1 & -1 & -1 \\
 1 &  0 &  1 &  1 \\
-1 & -1 &  0 & -1 \\
-1 & -1 & -1 &  0 \\
\end{array}\right)$$
Then $\|T\| = 1$ and if $S: X \to X$ extends $T$ then $\|S\| \geq \frac{4}{3}$.
\end{example}

\begin{example}\label{R6}
Let $X = \mathbb{R}^6_1$ and consider $Y = \{x \in X: \sum_{i=1}^6 x(i)=0\}$ generated by the basis $(u_2,u_3,u_4,u_5,u_6)$. Let $T: Y \to Y$ be given, in terms of such basis, by the following matrix:

$$M_T = \frac{1}{2}\left(
\begin{array}{rrrrr}
 1 &  1 &  1 &  1 &  0 \\
 0 &  0 &  1 & -1 &  1 \\
 1 &  0 &  0 &  1 &  1 \\
-1 & -1 &  0 &  0 & -1 \\
-1 &  1 & -1 &  0 &  0 \\
\end{array}\right)$$

Then $\|T\| = 1$ and if $S: X \to X$ extends $T$ then $\|S\| \geq \frac{3}{2}$.
\end{example}

We think the following two results are known, however we have not found a proper reference and therefore they are given with complete proofs:

\begin{proposition}
Assume the vectors $u_1, \dots, u_m \in \mathbb{R}^n$ def\mbox{}ine a (necessarily polyhedral) norm $\|\ \|$ in $\mathbb{R}^n$ by means of the expression
$$\|x\| = \max \{|u_1\cdot x|, \dots, |u_m \cdot x|\}$$
Then $(\mathbb{R}^n, \|\ \|)$ is linearly isometric to a subspace of $(\mathbb{R}^m, \|\ \|_\infty)$.
\end{proposition}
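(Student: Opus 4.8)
The plan is to exhibit an explicit linear isometric embedding, built directly from the defining functionals. I would define the map $\Phi:\mathbb{R}^n\to\mathbb{R}^m$ coordinatewise by
$$\Phi(x)=(u_1\cdot x,\,u_2\cdot x,\,\dots,\,u_m\cdot x).$$
Each coordinate $x\mapsto u_i\cdot x$ is a linear functional on $\mathbb{R}^n$, so $\Phi$ is linear; this step is immediate and requires no computation.

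The key observation is that $\Phi$ preserves norms verbatim. For any $x\in\mathbb{R}^n$, directly from the definition of $\|\ \|_\infty$ together with the hypothesis on $\|\ \|$, we have
$$\|\Phi(x)\|_\infty=\max_{1\leq i\leq m}|u_i\cdot x|=\|x\|.$$
Thus the only remaining point is injectivity. If $\Phi(x)=0$, then $\|x\|=\|\Phi(x)\|_\infty=0$, and since $\|\ \|$ is assumed to be a genuine norm (hence positive definite), this forces $x=0$. Therefore $\Phi$ is an injective linear map that preserves norms, i.e.\ a linear isometry onto its image.

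Consequently $\Phi$ identifies $(\mathbb{R}^n,\|\ \|)$ isometrically with the subspace $\Phi(\mathbb{R}^n)\subseteq(\mathbb{R}^m,\|\ \|_\infty)$, which is exactly the claim.

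I expect no genuine obstacle: the result is a direct unwinding of definitions. The only place where the hypothesis is actually used is in guaranteeing injectivity, and it is worth flagging that the assumption ``the $u_i$ define a norm'' is precisely the positive-definiteness condition $\max_i|u_i\cdot x|=0\Rightarrow x=0$, equivalently the statement that $\{u_1,\dots,u_m\}$ spans $\mathbb{R}^n$. Making this explicit is the one subtlety worth recording, since without it $\Phi$ would merely be a (non-injective) norm-preserving map rather than an embedding.
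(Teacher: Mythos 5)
Your proposal is correct and uses exactly the same map as the paper, namely $x\mapsto(u_1\cdot x,\dots,u_m\cdot x)$; the paper simply states this is a linear isometry without spelling out the verification. Your explicit remark that positive definiteness of the norm is what guarantees injectivity is a worthwhile clarification but does not change the argument.
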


\begin{proof}
\mbox{}
Consider the linear isometry $T: (\mathbb{R}^n, \|\ \|) \to (\mathbb{R}^m, \|\ \|_\infty)$ given by $Tx = (u_1\cdot x, \dots, u_m \cdot x)$.
\end{proof}

\begin{corollary}\label{r4intorinf}
$(\mathbb{R}^n, \|\ \|_1)$ is linearly isometric to a subspace of $(\mathbb{R}^{2^{n-1}}, \|\ \|_\infty)$.
\end{corollary}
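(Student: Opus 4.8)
The plan is to exhibit the $\ell_1$ norm on $\mathbb{R}^n$ in the max-of-absolute-values form required by the previous proposition, and then merely count how many vectors $u_k$ are needed. The starting point is the standard duality formula
$$\|x\|_1 = \sum_{i=1}^n |x_i| = \max_{\varepsilon \in \{-1,1\}^n} \sum_{i=1}^n \varepsilon_i x_i = \max_{\varepsilon \in \{-1,1\}^n} \varepsilon \cdot x,$$
where the maximum is attained by taking $\varepsilon_i = \operatorname{sign}(x_i)$. This expresses $\|\cdot\|_1$ as a maximum of linear functionals indexed by the $2^n$ sign vectors, which already has the flavor of a polyhedral norm but with twice as many vectors as the statement allows.

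Next I would remove the redundancy caused by antipodal symmetry. Since the sign vectors come in pairs $\{\varepsilon, -\varepsilon\}$ and $\max\{\varepsilon \cdot x, (-\varepsilon)\cdot x\} = |\varepsilon \cdot x|$, the previous display rewrites as $\|x\|_1 = \max |\varepsilon \cdot x|$, where it now suffices to let $\varepsilon$ range over a single representative of each antipodal pair, for instance over all sign vectors whose first coordinate equals $+1$. There are exactly $2^n/2 = 2^{n-1}$ such representatives; calling them $u_1, \dots, u_{2^{n-1}}$ we obtain
$$\|x\|_1 = \max\{|u_1 \cdot x|, \dots, |u_{2^{n-1}} \cdot x|\},$$
which is precisely the polyhedral form treated in the previous proposition with $m = 2^{n-1}$.

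Finally I would invoke that proposition directly: it furnishes the linear isometry $x \mapsto (u_1 \cdot x, \dots, u_{2^{n-1}} \cdot x)$ from $(\mathbb{R}^n, \|\cdot\|_1)$ onto a subspace of $(\mathbb{R}^{2^{n-1}}, \|\cdot\|_\infty)$, which is exactly the claim. There is no genuine obstacle here; the only point that requires a moment's care is the antipodal reduction halving the count from $2^n$ to $2^{n-1}$, which is what makes the exponent in the statement $2^{n-1}$ rather than the naive $2^n$.
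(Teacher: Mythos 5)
Your proof is correct and is essentially the paper's own argument: the paper writes $\sum_{i=1}^n |x_i| = \max\{|x_n+\sum_{i=1}^{n-1}\varepsilon_i x_i| : \varepsilon\in\{-1,1\}^{n-1}\}$, which is exactly your antipodal reduction with the last coordinate (rather than the first) fixed to $+1$, followed by the same appeal to the preceding proposition. No issues.
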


\begin{proof}
\mbox{}
Just note that $\sum_{i=1}^n |x_i| = \max\{|x_n+\sum_{i=1}^{n-1} \varepsilon_i x_i|: \varepsilon \in \{-1,1\}^{n-1}\}$.
\end{proof}

From Corollary \ref{r4intorinf} it is clear that the self-extension property is not inherited to subspaces, not even to hyperplanes (if the property were inherited to hyperplanes, inductively this would yield that it is inherited to f\mbox{}inite-codimensional subspaces, but this is impossible since $\mathbb{R}^4_1$ is linearly isometric to a subspace of $\mathbb{R}^8_\infty$).

\section{$k-$self-extension and stability}

We def\mbox{}ine a couple of concepts that allow us to study the stability of the self-extension property.

\begin{definition}[{\bf A ``self-extension coef\mbox{}f\mbox{}icient''}]
Let $X$ be a Banach space, we say $X$ is {\bf $k-$self-extensible} if for every subspace $Y \subseteq X$ and every linear, continuous operator $T: Y \to Y$ there exists a linear, continuous $S: X \to X$ such that $S|_Y=T$ and $\|S\| \leq k \|T\|$. Additionally, we def\mbox{}ine $$se(X) = \inf \{k \geq 1: X \textrm{ is } k-\textrm{self-extensible}\}$$ We will say $se(X) = \infty$ when there is no $k$ such that $X$ is $k-$self-extensible.
\end{definition}

Note that examples \ref{R4}, \ref{R5} and \ref{R6} actually show that $se(\mathbb{R}^4_1) \geq \frac{5}{4}$, $se(\mathbb{R}^5_1) \geq \frac{4}{3}$ and $se(\mathbb{R}^6_1) \geq \frac{3}{2}$ respectively.

Let us recall that the Banach-Mazur distance between two isomorphic Banach spaces $X$ and $Y$ can be def\mbox{}ined as

$$d(X,Y) = \inf \{\|U^{-1}\| : U \in {\rm ISO}(X,Y), \|U\| = 1\}$$ \mbox{ }

where ${\rm ISO}(X,Y)$ stands for the set of isomorphisms from $X$ to $Y$. It is well-known that $d$ is symmetric.

\begin{proposition}[{\bf On stability of $k-$self-extension}]\label{stabilityofself}
Let $X, Y$ be isomorphic Banach spaces. It holds that $se(X) \leq se(Y)d(X,Y)^2$.
\end{proposition}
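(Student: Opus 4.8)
The plan is to transport each extension problem on $X$ to $Y$ by conjugating with an isomorphism, solve it there, and pull the solution back. Fix an isomorphism $U \in \mathrm{ISO}(X,Y)$ with $\|U\| = 1$, so that $\|U^{-1}\|$ may be taken as close as we like to $d(X,Y)$. If $se(Y) = \infty$ there is nothing to prove, since $d(X,Y) \geq 1$; so assume $Y$ is $k$-self-extensible for some finite $k$. Because the set of admissible constants is upward closed, $Y$ is in fact $k$-self-extensible for every $k > se(Y)$, and I would work with such a $k$ throughout.

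Now let $Z \subseteq X$ be a closed subspace and $T: Z \to Z$ be continuous and linear. First I would form the subspace $U(Z) \subseteq Y$ and the conjugated operator $\tilde T := U T U^{-1}|_{U(Z)} : U(Z) \to U(Z)$, which is well defined because $U^{-1}$ maps $U(Z)$ back into $Z$ and $T$ leaves $Z$ invariant. Its norm is controlled by $\|\tilde T\| \leq \|U\|\,\|T\|\,\|U^{-1}\| = \|T\|\,\|U^{-1}\|$. Applying the $k$-self-extensibility of $Y$ to $\tilde T$ yields $\tilde S : Y \to Y$ with $\tilde S|_{U(Z)} = \tilde T$ and $\|\tilde S\| \leq k\|\tilde T\|$. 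Then I would set $S := U^{-1}\tilde S U : X \to X$. A direct check shows $S|_Z = T$: for $z \in Z$ one has $Uz \in U(Z)$, hence $\tilde S(Uz) = \tilde T(Uz) = U T z$, and so $Sz = U^{-1}\tilde S Uz = Tz$.

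It remains to estimate $\|S\|$. Chaining the bounds gives $\|S\| \leq \|U^{-1}\|\,\|\tilde S\|\,\|U\| \leq \|U^{-1}\|\cdot k\|\tilde T\| \leq k\,\|U^{-1}\|^2\,\|T\|$, and the square of $\|U^{-1}\|$ is the crux of the estimate: one factor enters when passing from $T$ to its conjugate $\tilde T$, and a second when pulling $\tilde S$ back to $S$. Since $Z$ and $T$ were arbitrary, $X$ is $(k\|U^{-1}\|^2)$-self-extensible, whence $se(X) \leq k\,\|U^{-1}\|^2$. Letting $k \downarrow se(Y)$ and then choosing $U$ so that $\|U^{-1}\| \to d(X,Y)$, and using that $t \mapsto t^2$ is continuous and increasing on $[0,\infty)$, yields $se(X) \leq se(Y)\,d(X,Y)^2$.

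I expect the only delicate points — rather than genuine obstacles — to be the careful handling of the two infima (over $k > se(Y)$ and over the normalizing isomorphisms $U$) and the verification that $\tilde T$ genuinely maps $U(Z)$ into itself; the norm bookkeeping is otherwise routine, and the whole argument is essentially that self-extensibility is a conjugation-covariant notion whose constant degrades by the square of the distortion.
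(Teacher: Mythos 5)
Your proof is correct and follows essentially the same route as the paper: conjugate by a norm-one isomorphism $U$, extend $UTU^{-1}$ on $U(Z)\subseteq Y$, pull back by $U^{-1}$, and take the infimum over $U$. The only difference is that you handle the infimum defining $se(Y)$ more carefully (working with $k>se(Y)$ and letting $k\downarrow se(Y)$), a point the paper's proof glosses over by implicitly treating $Y$ as $se(Y)$-self-extensible.
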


\begin{proof}
We can assume $se(Y) < \infty$, otherwise it is trivial. Let $U: X \to Y$ be an isomorphism such that $\|U\| = 1$.
Let $Z \subseteq X$ be a subspace and $T: Z \to Z$ a linear, continuous operator. Then $W=U(Z)$ is a subspace of $Y$ and $UTU^{-1}: W \to W$ is linear and continuous. Thus, there exists a linear, continuous extension $S: Y \to Y$ of $UTU^{-1}$ such that $\|S\| \leq se(Y)\|UTU^{-1}\| \leq se(Y)\|U^{-1}\|\|T\|$ and $U^{-1}SU: X \to X$ is an extension of $T$ satisfying $$\|U^{-1}SU\| \leq \|U^{-1}\|se(Y)\|U^{-1}\|\|T\| = se(Y)\|U^{-1}\|^2\|T\|.$$
We conclude by taking inf\mbox{}imum on $U$.
\end{proof}

By using $1$-self-extendability, we can discuss some topological properties of the set of self-extensible spaces isomorphic to a given one. Namely,

\begin{corollary}\label{corminkowski}
Let $X$ be a Banach space and let $\mathcal{C}$ be the set of Banach spaces isomorphic to $X$. Assume $\mathcal{C}$ is pseudometrized by $\log d$ and consider the sets
$$\mathcal{A} = \{Y \in \mathcal{C}: Y \textrm{ has the self-extension property, i.e. is }1-\textrm{self-extensible}\}$$
$$\mathcal{B} = \{Y \in \mathcal{C}: se(Y) = 1 \}$$
We have
\begin{itemize}
\item $\mathcal{B}$ is closed and $\mathcal{A} \subseteq \mathcal{B}$.
\item If $X$ is f\mbox{}inite-dimensional then $\mathcal{B}$ is compact and $\mathcal{A} = \mathcal{B}$.
\end{itemize}
\end{corollary}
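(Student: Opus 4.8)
The plan is to dispatch the first bullet directly from the definition of $se$ together with Proposition \ref{stabilityofself}, and to obtain the finite-dimensional bullet by combining the compactness of the Banach--Mazur compactum with a compactness argument inside the (finite-dimensional) operator space.

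For the inclusion $\mathcal{A}\subseteq\mathcal{B}$, recall that the infimum defining $se(Y)$ runs over $k\geq 1$, so $se(Y)\geq 1$ for every $Y$. If $Y\in\mathcal{A}$, then $Y$ is $1$-self-extensible, hence $1$ belongs to the set over which the infimum is taken and therefore $se(Y)=1$, i.e. $Y\in\mathcal{B}$. To see that $\mathcal{B}$ is closed I would use sequential closedness, which is legitimate since $\log d$ is a pseudometric and hence first countable. Given $Y_n\in\mathcal{B}$ with $\log d(Y_n,Y)\to 0$, that is $d(Y,Y_n)\to 1$, I apply Proposition \ref{stabilityofself} with $Y$ in the role of $X$ and $Y_n$ in the role of $Y$ to get $se(Y)\leq se(Y_n)\,d(Y,Y_n)^2=d(Y,Y_n)^2\to 1$. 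Since $se(Y)\geq 1$, this forces $se(Y)=1$, so $Y\in\mathcal{B}$.

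Now assume $X$ is finite-dimensional, say $\dim X=n$. Then every $n$-dimensional Banach space is isomorphic to $X$, so $\mathcal{C}$ is (a representative of) the Banach--Mazur compactum and is therefore compact for $\log d$; being a closed subset of a compact space, $\mathcal{B}$ is compact. It remains to prove $\mathcal{B}\subseteq\mathcal{A}$. Fix $Y\in\mathcal{B}$, a subspace $Z\subseteq Y$ and a nonzero operator $T\colon Z\to Z$, normalized so that $\|T\|=1$. Because $se(Y)=1$, for each $n$ there is an extension $S_n\colon Y\to Y$ of $T$ with $\|S_n\|\leq 1+\tfrac{1}{n}$. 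The $S_n$ lie in the finite-dimensional space $\mathcal{L}(Y)$ and are uniformly bounded, so a subsequence converges to some $S\in\mathcal{L}(Y)$. Passing to the limit in $S_{n_k}|_Z=T$ (the restriction map is continuous) yields $S|_Z=T$, while $\|S\|=\lim_k\|S_{n_k}\|\leq 1$. Since any extension of $T$ has norm at least $\|T\|=1$, I conclude $\|S\|=\|T\|$; thus $Y$ is $1$-self-extensible and $Y\in\mathcal{A}$.

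The inclusion $\mathcal{A}\subseteq\mathcal{B}$ and the closedness of $\mathcal{B}$ are essentially formal consequences of the definition of $se$ and of Proposition \ref{stabilityofself}, so I do not expect them to present difficulties. The one substantive point is the equality $\mathcal{A}=\mathcal{B}$ in the finite-dimensional case, which amounts to showing that when the infimum defining $se(Y)$ equals $1$ it is actually \emph{attained} by a genuine norm-preserving extension. This is precisely where finite-dimensionality enters, and it does so twice: first to guarantee that the ambient space $\mathcal{C}$ is the compact Banach--Mazur compactum, and second to ensure that the sequence of near-optimal extensions $S_n$ admits a convergent subsequence whose limit realizes a norm-one extension. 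In infinite dimensions both compactness phenomena fail, which is why the equality is only asserted for finite-dimensional $X$.
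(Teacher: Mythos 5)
Your proposal is correct and follows essentially the same route as the paper: $\mathcal{A}\subseteq\mathcal{B}$ from the definition of the infimum, closedness of $\mathcal{B}$ via Proposition \ref{stabilityofself}, compactness of $\mathcal{B}$ from the Banach--Mazur compactum, and $\mathcal{B}\subseteq\mathcal{A}$ by extracting a convergent subsequence of near-optimal extensions in the finite-dimensional space $\mathcal{L}(Y)$. Your write-up merely spells out the closedness estimate $se(Y)\leq se(Y_n)\,d(Y,Y_n)^2\to 1$ that the paper leaves implicit.
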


\begin{proof}
The closedness of $\mathcal{B}$ is due to Proposition \ref{stabilityofself}, whereas $\mathcal{A} \subseteq \mathcal{B}$ is deduced from the def\mbox{}initions.

Assume now that $X$ is f\mbox{}inite-dimensional. It is well known that $\mathcal{C}$ is a compact metric space and therefore $\mathcal{B}$ is compact. Finally, let $Y \in \mathcal{B}$ and $Z \subseteq Y$ be a subspace. If $T: Z \to Z$ is a continuous linear mapping then for each $n$ there exists an extension of $T$, say $S_n: Y \to Y$, such that $\|S_n\| \leq (1+\frac{1}{n})\|T\|$. Since $\mathcal{L}(Y,Y)$ is f\mbox{}inite-dimensional, $S_n$ has a subsequence converging to some $S: Y \to Y$ which is necessarily an extension of $T$ satisfying $\|S\| = \|T\|$.
\end{proof}

Now we will need the fact that $d(\mathbb{R}^4_1,\mathbb{R}^4_p) = 4^{1-1/p}$ when $p \leq 2$ (this can be found in the book \cite[Page 280]{Tom1989}, where the results are attributed to Gurarij, Kadets and Macaev in \cite{GKM1965} and \cite{GKM1966}).

\begin{corollary}[{\bf particularization for $\mathbb{R}^4_p$}]\label{particr4}
Let $X$ be a 4-dimensional normed space such that $d(X,\mathbb{R}^4_1) < \frac{\sqrt{5}}{2}$. Then $X$ does not satisfy the self-extension property. In particular, $\mathbb{R}^4_p$ is not self-extensible for every $p$ such that $1 \leq p < \frac{1}{1-\log_4\frac{\sqrt{5}}{2}} = 1.0875 \dots$.
\end{corollary}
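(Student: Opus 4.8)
The plan is to combine the lower bound on $se(\mathbb{R}^4_1)$ extracted from Example~\ref{R4} with the stability estimate of Proposition~\ref{stabilityofself}. From Example~\ref{R4} we know that $se(\mathbb{R}^4_1) \geq \frac{5}{4}$. Now suppose, toward a contradiction, that a $4$-dimensional space $X$ with $d(X,\mathbb{R}^4_1) < \frac{\sqrt{5}}{2}$ \emph{did} satisfy the self-extension property, i.e.\ $se(X) = 1$. Applying Proposition~\ref{stabilityofself} with the roles of $X$ and $Y$ chosen so that $\mathbb{R}^4_1$ is the space whose coefficient we bound, I would write
$$se(\mathbb{R}^4_1) \leq se(X)\, d(\mathbb{R}^4_1, X)^2 = 1 \cdot d(\mathbb{R}^4_1, X)^2 = d(X,\mathbb{R}^4_1)^2,$$
where the last equality uses the symmetry of $d$. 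Combining this with the lower bound gives $\frac{5}{4} \leq d(X,\mathbb{R}^4_1)^2$, hence $d(X,\mathbb{R}^4_1) \geq \frac{\sqrt{5}}{2}$, contradicting the hypothesis. This establishes the first assertion.

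For the particularization to $\mathbb{R}^4_p$, I would simply substitute the known Banach--Mazur distance $d(\mathbb{R}^4_1,\mathbb{R}^4_p) = 4^{1-1/p}$ (valid for $1 \leq p \leq 2$, as quoted just before the corollary) into the threshold condition $d(X,\mathbb{R}^4_1) < \frac{\sqrt{5}}{2}$. The task then reduces to solving the inequality $4^{1-1/p} < \frac{\sqrt{5}}{2}$ for $p$. Taking logarithms base $4$ yields $1 - \frac{1}{p} < \log_4 \frac{\sqrt{5}}{2}$, and rearranging gives $\frac{1}{p} > 1 - \log_4\frac{\sqrt{5}}{2}$, i.e.\ $p < \frac{1}{1 - \log_4\frac{\sqrt{5}}{2}}$. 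A numerical evaluation confirms this upper bound equals $1.0875\dots$, matching the stated figure. Since this bound lies comfortably below $2$, the restriction $p \leq 2$ needed for the distance formula is automatically respected throughout the relevant range.

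I do not anticipate a genuine obstacle here, since the argument is a direct chaining of two previously established facts. The only points requiring care are bookkeeping ones: making sure the stability inequality is applied in the correct direction (it bounds $se$ of the \emph{first} argument in terms of $se$ of the second, so one must put $\mathbb{R}^4_1$ first), and invoking the symmetry of $d$ explicitly so that $d(\mathbb{R}^4_1,X)$ and $d(X,\mathbb{R}^4_1)$ may be interchanged. One should also note that the strict inequality in the hypothesis is essential: it is the strictness that produces a genuine contradiction with $se(\mathbb{R}^4_1) \geq \frac{5}{4}$ rather than merely a boundary case.
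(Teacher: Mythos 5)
Your argument is correct and is essentially the paper's own proof: the paper likewise assumes $se(X)=1$, applies Proposition~\ref{stabilityofself} to conclude $se(\mathbb{R}^4_1)<\frac{5}{4}$, contradicting Example~\ref{R4}, and then performs the same elementary computation with $d(\mathbb{R}^4_1,\mathbb{R}^4_p)=4^{1-1/p}$. Your version merely spells out the direction of the stability inequality and the symmetry of $d$, which the paper leaves implicit.
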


\begin{proof}\mbox{ }

\begin{itemize}
\item Suppose that $X$ is self-extensible, that is, $se(X) = 1$; then, the previous proposition implies $se(\mathbb{R}^4_1) < \frac{5}{4}$ which is false.
\item Elementary computation leads us to $1 \leq p < \frac{1}{1-\log_4\frac{\sqrt{5}}{2}}$ if and only if $1 \leq 4^{1-1/p} < \frac{\sqrt{5}}{2}$.
\end{itemize}
\end{proof}

Corollary \ref{particr4} could be restated for ${\mathbb{R}}^5_1$ or ${\mathbb{R}}^6_1$ by using the aforementioned examples, but we will make no use of that. Some consequences of the previous results are:

\begin{itemize}
\item The self-extension property is not transferred to preduals or duals (since $\mathbb{R}^4_1$ lacks the property whereas $\mathbb{R}^4_\infty$ has it).
\item The self-extension property is not deduced from uniform convexity plus uniform smoothness (since $\mathbb{R}^4_{1.08}$ has both geometrical properties and lacks self-extension).
\item The self-extension property is not inherited to subspaces, not even to hyperplanes (already mentioned after Corollary \ref{r4intorinf}).
\end{itemize}

Finally, let us mention some open questions which appear naturally:

\begin{itemize}
\item Suppose the Banach space $X$ has the following property: Whenever $Y$ is a hyperplane of $X$ and $T: Y \to Y$ is a continuous linear operator, there exists a continuous linear extension $S: X \to X$ satisfying $\|T\| = \|S\|$. Can we deduce that $X$ has the self-extension property?
\item Is there a Banach space $X$ satisfying $se(X) = \infty$?
\item In Corollary \ref{corminkowski}, is it true that $\mathcal{A} = \mathcal{B}$ without any additional hypothesis on $X$? In other words, does $X$ have the self-extension property if $se(X)=1$?
\end{itemize}

\section{Examples of inf\mbox{}inite dimensional Banach spaces failing the self-extension property}

Although the self-extension property is not inherited to hyperplanes, it is nonetheless inherited to $1$-complemented subspaces:

\begin{proposition}\label{her}
Let $X$ be a Banach space and $Y$ a $q$-complemented subspace of $X$. Then $se(Y) \leq q se(X)$. Moreover, if $X$ has the self-extension property and $q=1$ then $Y$ also enjoys it.
\end{proposition}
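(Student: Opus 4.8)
The plan is to transfer extensions obtained in $X$ down to $Y$ by sandwiching them between the projection and the inclusion. Fix a projection $P:X\to Y$ with $\|P\|\le q$ and write $i:Y\hookrightarrow X$ for the (isometric) inclusion. Given a subspace $Z\subseteq Y$ and a continuous linear operator $T:Z\to Z$, the point is that $Z$ is also a subspace of $X$ and $T$ is an operator on it, so whatever self-extension behaviour $X$ has applies to the pair $(Z,T)$; the projection is then used to correct the ambient extension back into $Y$.

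First I would fix $\varepsilon>0$ and use that $X$ is $(se(X)+\varepsilon)$-self-extensible to produce $\tilde S:X\to X$ with $\tilde S|_Z=T$ and $\|\tilde S\|\le (se(X)+\varepsilon)\|T\|$. Then I would set $S:=P\circ\tilde S\circ i:Y\to Y$ and verify the two required properties. For the extension property, for $z\in Z$ one has $S(z)=P(\tilde S(z))=P(T(z))$; since $T(z)\in Z\subseteq Y$ and $P$ restricts to the identity on $Y$, this equals $T(z)$, so $S|_Z=T$. For the norm bound, $\|S\|\le\|P\|\,\|\tilde S\|\,\|i\|\le q\,(se(X)+\varepsilon)\|T\|$, using $\|i\|=1$. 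Hence $Y$ is $q(se(X)+\varepsilon)$-self-extensible, and letting $\varepsilon\to 0$ yields $se(Y)\le q\,se(X)$.

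For the ``moreover'' clause I would run the identical construction with $q=1$, now invoking the hypothesis that $X$ has the self-extension property, which supplies a $\tilde S$ with the exact equality $\|\tilde S\|=\|T\|$ instead of an approximate estimate. Then $S=P\circ\tilde S\circ i$ again extends $T$ and satisfies $\|S\|\le\|P\|\,\|\tilde S\|=\|T\|$; conversely $\|S\|\ge\|S|_Z\|=\|T\|$ because $S$ extends $T$. Therefore $\|S\|=\|T\|$ and $Y$ enjoys the self-extension property.

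The only genuinely delicate point is the verification that $S$ actually restricts to $T$ on $Z$: this hinges on the fact that $T$ maps $Z$ into $Z$ (not merely into $Y$), so that the correction by $P$ is invisible on $Z$ because $P$ fixes $Y$ pointwise. Everything else is a routine chase of operator norms through the composition, together with the standard $\varepsilon$-argument needed to pass from $(se(X)+\varepsilon)$-self-extensibility to the infimum $se(X)$.
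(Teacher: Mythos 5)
Your proposal is correct and follows essentially the same route as the paper: extend $T$ to the ambient space using the self-extensibility of $X$ and then compose with the $q$-projection to land back in $Y$, the key observation in both being that $P$ fixes $Y$ pointwise so the restriction to $Z$ is unaffected. Your explicit $\varepsilon$-argument for passing to the infimum $se(X)$ (rather than assuming the infimum is attained) is a welcome point of care that the paper leaves implicit.
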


\begin{proof}
Assume $X$ is $k$-self-extensible for some $k \geq 1$. Let $Z$ be a closed subspace of $Y$ and $T\in\mathcal{L}(Z)$. There exists $R\in\mathcal{L}(X)$ such that $\|R\| \leq k\|T\|$ and $R|_Z=T$. If $P:X\to X$ is a $q$-projection onto $Y$, the mapping $S = P \circ R|_Y: Y \to Y$ satisf\mbox{}ies $\|S\| \leq \|P\|\|R\| \leq q\|R\| \leq qk\|T\|$ and $Sz = PRz = PTz = Tz$ for every $z \in Z$.
\end{proof}

Since every $\mathbb{R}^n_1$ is $1$-complemented in $\mathbb{R}^{n+1}_1$ and these spaces are all $1$-complemented in $\ell_1$, joining proposition \ref{her} with example \ref{R6} we immediately obtain the following result.

\begin{corollary}
The real $\ell_1$ fails the self-extension property, and moreover
$$\frac{3}{2} \leq se(\mathbb{R}^6_1) \leq se(\mathbb{R}^7_1) \leq \dots \leq se(\ell_1)$$
\end{corollary}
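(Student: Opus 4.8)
The statement to prove is the chain
$$\frac{3}{2} \leq se(\mathbb{R}^6_1) \leq se(\mathbb{R}^7_1) \leq \dots \leq se(\ell_1),$$
together with the assertion that real $\ell_1$ fails the self-extension property. The plan is to read off everything from Proposition \ref{her} once the right $1$-complementation facts are in place, so the corollary is essentially a bookkeeping exercise. The leftmost inequality $\frac{3}{2} \leq se(\mathbb{R}^6_1)$ is not proved here at all: it is exactly the content of Example \ref{R6}, where an operator $T$ on a hyperplane $Y$ of $\mathbb{R}^6_1$ with $\|T\|=1$ is exhibited whose every extension $S$ satisfies $\|S\| \geq \frac{3}{2}$, so by definition of the infimum $se(\mathbb{R}^6_1) \geq \frac{3}{2}$.

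The heart of the matter is verifying the two $1$-complementation claims invoked in the sentence preceding the corollary: that each $\mathbb{R}^n_1$ is $1$-complemented in $\mathbb{R}^{n+1}_1$, and that each $\mathbb{R}^n_1$ is $1$-complemented in $\ell_1$. First I would check the finite step. Identifying $\mathbb{R}^n_1$ with the subspace $\{x \in \mathbb{R}^{n+1}_1 : x(n+1)=0\}$, the natural coordinate projection $P(x_1,\dots,x_{n+1}) = (x_1,\dots,x_n,0)$ is linear, idempotent, and satisfies $\|Px\|_1 = \sum_{i=1}^n |x_i| \leq \sum_{i=1}^{n+1} |x_i| = \|x\|_1$, so $\|P\| \leq 1$, giving a $1$-projection onto $\mathbb{R}^n_1$. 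The same formula (truncation to the first $n$ coordinates) works verbatim to realize $\mathbb{R}^n_1$ as a $1$-complemented subspace of $\ell_1$. Hence every space in the chain, finite or infinite, is a $q=1$ complemented subspace of every later one.

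With these facts available, the monotone chain follows by applying the inequality $se(Y) \leq q\, se(X)$ from Proposition \ref{her} with $q=1$: whenever $\mathbb{R}^m_1$ is $1$-complemented in $\mathbb{R}^{m+1}_1$ (respectively in $\ell_1$), we get $se(\mathbb{R}^m_1) \leq se(\mathbb{R}^{m+1}_1)$ (respectively $se(\mathbb{R}^m_1) \leq se(\ell_1)$). Chaining these from $m=6$ upward produces the displayed inequalities, with the final term bounded above by $se(\ell_1)$. Finally, to see that $\ell_1$ fails the self-extension property, combine the lower bound $\frac{3}{2} \leq se(\mathbb{R}^6_1)$ with the chain to obtain $se(\ell_1) \geq \frac{3}{2} > 1$; since a space with the self-extension property is $1$-self-extensible and hence has $se = 1$, the strict inequality $se(\ell_1) > 1$ rules the property out. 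I expect no genuine obstacle here: the only content beyond definitions is the trivial verification that coordinate truncation is a norm-one projection in the $\ell_1$ norm, and the reuse of Example \ref{R6} for the base of the chain.
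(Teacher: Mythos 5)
Your proposal is correct and follows essentially the same route as the paper, which derives the corollary in one sentence from Proposition \ref{her}, Example \ref{R6}, and the (unstated but standard) fact that coordinate truncation is a norm-one projection in the $\ell_1$ norm. You merely spell out the details the paper leaves implicit, so there is nothing to add.
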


We think that the previous inequalities are indeed equalities. In other words,

\begin{conjecture}
$se(\ell_1) = \frac{3}{2}$.
\end{conjecture}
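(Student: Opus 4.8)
The corollary immediately preceding the conjecture already supplies the lower bound $se(\ell_1)\ge\tfrac32$, so the entire content of the statement is the upper bound $se(\ell_1)\le\tfrac32$; equivalently, every operator $T$ on a closed subspace $Y\subseteq\ell_1$ should admit an extension $S\in\mathcal{L}(\ell_1)$ with $\|S\|\le\tfrac32\|T\|$. The plan is to attack this in two stages: first reduce the claim to a \emph{uniform} finite-dimensional estimate $se(\mathbb{R}^n_1)\le\tfrac32$ valid for all $n$, and then prove that estimate by generalizing the extreme-point construction used for $\mathbb{R}^3_1$.

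For the reduction to finite dimensions, note that since each $\mathbb{R}^n_1$ is $1$-complemented in $\ell_1$, Proposition \ref{her} already gives $se(\mathbb{R}^n_1)\le se(\ell_1)$, which is the easy inequality; the work is the converse $se(\ell_1)\le\sup_n se(\mathbb{R}^n_1)$. I would fix $Y$ and $T$ with $\|T\|=1$ together with $\varepsilon>0$ and manufacture an extension of norm at most $\tfrac32+\varepsilon$ from finite-dimensional data, using the coordinate projections $P_n:\ell_1\to\mathbb{R}^n_1$ (which are $1$-projections) to approximate $Y$ by a finite-dimensional subspace and $T$ by a finite-rank operator, solving the problem inside a finite section where the bound applies, and then passing to the limit. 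Because the norm of a map out of $\ell_1$ equals $\sup_i\|Se_i\|_1$, the candidate extensions are determined by the sequence of images $(Se_i)_i$, and I would extract a limiting extension by a diagonal argument on these images, each confined to the fixed ball of radius $\tfrac32$. Turning this limit into a genuine (not merely approximate) extension of $T$ is the delicate point here, since $\ell_1$ is not reflexive and the unit ball of $\mathcal{L}(\ell_1)$ carries no convenient compactness; I would route the compactness through coordinatewise $w^*$-limits in $\ell_\infty=\ell_1^*$.

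The uniform finite-dimensional bound is where I expect the real difficulty to lie. The task is an extremal problem that must be solved uniformly in $n$: given $Y\subseteq\mathbb{R}^n_1$ and $T:Y\to Y$ with $\|T\|=1$, build $S$ with $\|S\|\le\tfrac32$. I would try to generalize the mechanism of Theorem \ref{R3}: parametrize $T$ through the images of the extreme points of $\mathsf{B}_Y$ and determine $S$ by prescribing each $Se_i$ as a suitable median or barycenter of those images, coordinatewise, in the spirit of the vector $z$ built in that proof from Lima's $R_3$-property. The governing identities express each $\|Se_i\|$ as a balanced combination of distances between the prescribed images, and one must show these can always be arranged so that every $\|Se_i\|_1\le\tfrac32$.

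The obstacle is precisely the stabilization: the examples $\mathbb{R}^4_1,\mathbb{R}^5_1,\mathbb{R}^6_1$ show the constant climbing through $\tfrac54,\tfrac43,\tfrac32$, so the conjecture asserts that it then settles at $\tfrac32$, and the summation trick that produces the lower bounds yields no matching upper estimate. I would search for a single dimension-free invariant of the extremal configurations — the hyperplane operators realizing the lower bounds are highly symmetric, resembling $\pm1$ sign patterns — and try to prove an inequality bounding the extension norm by $\tfrac32$ independently of $n$. Absent such an invariant, it is this uniform bound, rather than the limiting argument of the first stage, that I expect to resist a short proof.
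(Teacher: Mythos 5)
The statement you are addressing is presented in the paper as a \emph{conjecture}: the authors prove only the lower bound, via the chain $\frac{3}{2}\le se(\mathbb{R}^6_1)\le se(\mathbb{R}^7_1)\le\dots\le se(\ell_1)$ obtained from Example \ref{R6} and Proposition \ref{her}, and they offer no argument whatsoever for the upper bound $se(\ell_1)\le\frac{3}{2}$. Your proposal correctly isolates that upper bound as the entire content of the claim, but neither stage of your two-stage plan is carried out, so what you have written is a research programme rather than a proof. The decisive gap is the uniform finite-dimensional estimate $\sup_n se(\mathbb{R}^n_1)\le\frac{3}{2}$: nothing in the paper, and nothing in your sketch, shows that the sequence of lower bounds $\frac{5}{4},\frac{4}{3},\frac{3}{2}$ produced by Examples \ref{R4}--\ref{R6} stabilizes rather than continuing to grow (conceivably to $2$, or to infinity). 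The median/barycenter construction of Theorem \ref{R3} rests on Lima's $R_3$ intersection property, which governs configurations of three points and three radii; there is no stated analogue that would control the images of $n$ basis vectors simultaneously with a dimension-free constant, and you acknowledge as much. Until such an inequality is proved, the conjecture remains open.

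Your first stage also has an unaddressed defect beyond the compactness issue you flag. For a general closed subspace $Y\subseteq\ell_1$ and $T\in\mathcal{L}(Y)$, the coordinate sections $P_n(Y)$ need not be contained in $Y$, need not be invariant under anything resembling $T$, and $Y$ itself may be infinite dimensional with no finite-dimensional subspace on which $T$ acts; so ``solving the problem inside a finite section'' is not a well-posed reduction as stated. One would first have to replace $(Y,T)$ by a net of pairs $(Y_\alpha,T_\alpha)$ with $Y_\alpha\subseteq\mathbb{R}^{n_\alpha}_1$ invariant under $T_\alpha$ and with $T_\alpha$ converging to $T$ in a sense strong enough that the $w^*$-limit (in $\sigma(\ell_1,c_0)$, note, not in $\ell_\infty=\ell_1^*$) of the extensions still restricts to $T$ on all of $Y$; this is exactly the step at which the approximate-extension property can fail to yield an exact extension, and it is the substance of the paper's third open question (whether $se(X)=1$ implies the self-extension property). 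In short: the lower bound is the paper's corollary, the upper bound is genuinely open, and your proposal does not close it.
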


The $c_0$ case will be studied in the next section. Now we will continue scratching more consequences out of Proposition \ref{her} in terms of failing self-extendability, isomorphically speaking.

\begin{theorem}
Every real Banach space of dimension greater than or equal to three can be equivalently renormed to fail the self-extension property.
\end{theorem}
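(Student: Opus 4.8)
The plan is to separate the argument according to dimension, reducing the case $\dim X \geq 4$ to the failure of $\mathbb{R}^4_1$ (Example \ref{R4}) by means of Proposition \ref{her}, and treating $\dim X = 3$ by hand.

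For $\dim X \geq 4$ the idea is to build an equivalent norm under which $X$ contains a $1$-complemented isometric copy of $\mathbb{R}^4_1$. First I would fix a $4$-dimensional subspace $F \subseteq X$ together with a linear isomorphism onto $\mathbb{R}^4$, transporting the $\|\cdot\|_1$-norm to a norm $\|\cdot\|_F$ on $F$ so that $(F,\|\cdot\|_F)$ is isometric to $\mathbb{R}^4_1$. Since every finite-dimensional subspace of a Banach space is complemented (an Auerbach basis yields a bounded projection), I may write $X = F \oplus G$ as a topological direct sum with $G$ closed. I would then define
$$\|f+g\| := \max\{\|f\|_F,\ \|g\|_{G}\}\qquad(f\in F,\ g\in G),$$
where $\|\cdot\|_G$ is the restriction of the original norm to $G$. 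Because the decomposition is topological and $F$ is finite-dimensional, this new norm is equivalent to the original one, and by construction it induces precisely $\|\cdot\|_F$ on $F$, since $\|f\| = \max\{\|f\|_F,\|0\|_G\} = \|f\|_F$ for $f\in F$.

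The next step is to check that $F$ is $1$-complemented in $(X,\|\cdot\|)$: the projection $P: f+g\mapsto f$ satisfies $\|P(f+g)\|=\|f\|_F\leq\max\{\|f\|_F,\|g\|_G\}=\|f+g\|$, so $\|P\|=1$. Since $(F,\|\cdot\|)$ is isometric to $\mathbb{R}^4_1$, which fails the self-extension property by Example \ref{R4}, the contrapositive of the $q=1$ part of Proposition \ref{her} forces $(X,\|\cdot\|)$ to fail the self-extension property as well.

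Finally, the case $\dim X = 3$ must be handled separately, and this is where the main obstacle lies: the smallest counterexample at our disposal, $\mathbb{R}^4_1$, does not fit into a three-dimensional space, and indeed $\mathbb{R}^3_1$, $\mathbb{R}^3_2$ and $\mathbb{R}^3_\infty$ all satisfy the property (the first by Theorem \ref{R3}). Here I would invoke the three-dimensional space $E$ constructed in \cite{LPR1998} (the example mentioned in the Introduction), which fails self-extension: choosing any linear isomorphism $\varphi: X\to E$ and setting $\|x\|:=\|\varphi(x)\|_E$ produces an equivalent norm (all norms on a finite-dimensional space are equivalent) making $X$ linearly isometric to $E$, hence failing the self-extension property. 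Combining the two cases completes the argument.
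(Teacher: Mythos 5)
Your proof is correct and follows essentially the same route as the paper: renorm so that a known finite-dimensional counterexample becomes a $1$-complemented isometric subspace (via exactly the max-sum decomposition you spell out), then apply Proposition \ref{her}. The only difference is that your case split is unnecessary --- the paper uses the three-dimensional example of \cite{LPR1998} uniformly for every $\dim(X)\geq 3$, which handles the case $\dim(X)\geq 4$ just as well as $\mathbb{R}^4_1$ does.
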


\begin{proof}
Let $X$ be a real Banach space with $\dim(X)\geq 3$. Consider any $3$-dimensional subspace $Y$ of $X$. We can trivially renorm $X$ equivalently in such a way that $Y$ is $1$-complemented in $X$ and isometric to the non-self-extensible $3$-dimensional example given in \cite{LPR1998}. Finally observe that $X$ endowed with this new equivalent norm cannot enjoy the self-extension property because $Y$ is a $1$-complemented subspace of $X$ that fails it (see Proposition \ref{her}).
\end{proof}

\section{Finite self-extension property}

We already know that $\ell_\infty$ enjoys the self-extension property because it is $1$-injective (it is a $\mathcal{C}(K)$ with $K$ extremally disconnected). However, extensions of operators def\mbox{}ined on closed subspaces of $\ell_\infty$ can be easily constructed explicitly in virtue of the Hahn-Banach Extension Theorem. Indeed,  if $Y$ is a closed subspace of $\ell_\infty$ and $S:Y\to Y$ is linear and continuous, then  it suf\mbox{}f\mbox{}ices to def\mbox{}ine $$\begin{array}{rrcl} T : & \ell_\infty &\to& \ell_\infty \\ & x & \mapsto & T(x):=(\gamma_n(x))_{n\in\mathbb{N}},\end{array}$$ where, for every $n\in \mathbb{N}$, $\gamma_n\in\left(\ell_\infty\right)^*$ is a extension of $\delta_n|_Y\circ S\in Y^*$ with the same norm, that is, $\gamma_n|_Y=\delta_n|_Y\circ S$ and $\|\gamma_n\|=\left\|\delta_n|_Y\circ S\right\|$. Here, $\delta_n$ stands for the coordinate evaluation functional on $\ell_\infty$ $$\begin{array}{rrcl} \delta_n : &\ell_\infty &\to&\mathbb{K} \\ & x & \mapsto & \delta_n(x):=x(n).\end{array}$$

\begin{definition}
A Banach space $X$ is said to enjoy the f\mbox{}inite self-extension property if for every f\mbox{}inite dimensional subspace $Y$ of $X$ and every linear and continuous $T:Y\to Y$, there exists a continuous linear extension of $T$ preserving its norm. In other words, there exists $S:X\to X$ linear and continuous such that $S|_Y=T$ and $\|S\|=\|T\|$.
\end{definition}

Following the same ideas described at the beginning of this section, we will prove that $c_0$ has the f\mbox{}inite self-extension property.

\begin{definition}
A subspace $Y$ of $\ell_\infty$ is called $\delta$-null provided that $(\delta_n|_Y)_{n\in\mathbb{N}}$ converges to $0$ in $Y^*$.
\end{definition}

Trivial examples of $\delta$-null subspaces of $\ell_\infty$ are $\ell_\infty^n$ for all $n\in\mathbb{N}$. In fact, a standard compactness argument shows that the $\delta$-null subspaces of $c_0$ are exactly its f\mbox{}inite dimensional subspaces.

\begin{theorem}
If $Y$ is a $\delta$-null subspace of $c_0$, then every $T\in \mathcal{L}(Y)$ can be extended to a $S \in \mathcal{L}(c_0)$ preserving its norm. In particular, $c_0$ satisf\mbox{}ies the f\mbox{}inite self-extension property.
\end{theorem}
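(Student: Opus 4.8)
The plan is to mimic the explicit Hahn--Banach construction carried out at the beginning of this section for $\ell_\infty$; the only genuinely new ingredient is the $\delta$-null hypothesis, which is precisely what forces the image of $S$ to land in $c_0$ rather than merely in $\ell_\infty$. Concretely, given $T\in\mathcal{L}(Y)$, for each $n\in\mathbb{N}$ the composition $\delta_n|_Y\circ T$ is a functional in $Y^*$; invoking the Hahn--Banach Extension Theorem I would choose $\gamma_n\in(c_0)^*=\ell_1$ with $\gamma_n|_Y=\delta_n|_Y\circ T$ and $\|\gamma_n\|=\|\delta_n|_Y\circ T\|$. I would then define $S:c_0\to c_0$ by $S(x):=(\gamma_n(x))_{n\in\mathbb{N}}$ and verify three things: that $S(x)$ indeed belongs to $c_0$, that $S$ extends $T$, and that $\|S\|=\|T\|$.

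That $S$ extends $T$ is immediate, since for $y\in Y$ one has $\gamma_n(y)=(\delta_n|_Y\circ T)(y)=\delta_n(Ty)=(Ty)(n)$, whence $S(y)=Ty$. For the norm, all quantities involved are nonnegative, so the two suprema may be interchanged:
$$\|S\|=\sup_{x\in\mathsf{B}_{c_0}}\sup_{n}|\gamma_n(x)|=\sup_{n}\sup_{x\in\mathsf{B}_{c_0}}|\gamma_n(x)|=\sup_{n}\|\gamma_n\|=\sup_{n}\|\delta_n|_Y\circ T\|.$$
Performing the same bookkeeping on $T$ gives $\|T\|=\sup_{y\in\mathsf{B}_Y}\sup_{n}|(\delta_n|_Y\circ T)(y)|=\sup_{n}\|\delta_n|_Y\circ T\|$, so the two norms agree. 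It is exactly the norm-preservation in the Hahn--Banach step that makes these suprema coincide.

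The step I expect to be the crux is showing that $S(x)\in c_0$ for every $x\in c_0$, and this is where the $\delta$-null hypothesis enters decisively. From $\|\gamma_n\|=\|\delta_n|_Y\circ T\|\leq\|T\|\,\|\delta_n|_Y\|$ together with $\|\delta_n|_Y\|\to 0$ (the very definition of $\delta$-null) I obtain $\|\gamma_n\|\to 0$, and hence $|\gamma_n(x)|\leq\|\gamma_n\|\,\|x\|_\infty\to 0$, so that $S(x)\in c_0$. Dropping the $\delta$-null assumption breaks this argument and leaves $S(x)$ only in $\ell_\infty$, which is why the hypothesis is essential. Finally, the ``in particular'' clause is immediate from the remark preceding the statement that the $\delta$-null subspaces of $c_0$ are exactly its finite-dimensional ones: every finite-dimensional $Y\subseteq c_0$ is $\delta$-null, so the first part applies and yields the finite self-extension property of $c_0$.
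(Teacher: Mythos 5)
Your proposal is correct and follows essentially the same route as the paper: Hahn--Banach extensions $\gamma_n$ of $\delta_n|_Y\circ T$, the map $S(x)=(\gamma_n(x))_n$, the $\delta$-null hypothesis guaranteeing $\|\gamma_n\|\to 0$ so that $S$ lands in $c_0$, and the bound $\sup_n\|\gamma_n\|\leq\|T\|$ for norm preservation. The only cosmetic difference is that you compute $\|S\|$ and $\|T\|$ exactly by interchanging suprema, where the paper uses the sandwich $\|T\|=\|S|_Y\|\leq\|S\|\leq\sup_n\|\gamma_n\|\leq\|T\|$; both are valid.
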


\begin{proof}
By the Hahn-Banach Theorem, we can extend every $\delta_n|_Y\circ T$ to a $\gamma_n\in c_0^*$ such that $\|\gamma_n\|=\|\delta_n|_Y\circ T\|\leq \|\delta_n|_Y\|\|T\|\leq \|T\|$. Consider $$\begin{array}{rrcl} S : & c_0 &\to& c_0 \\ & x & \mapsto & S(x):=(\gamma_n(x))_{n\in\mathbb{N}}.\end{array}$$ Observe the following:
\begin{itemize}
\item $S$ is well-def\mbox{}ined. Indeed, $|\gamma_n(x)|\leq \|\gamma_n\|\|x\|\leq \|\delta_n|_Y\|\|T\|\|x\|\to 0$.
\item $S|_Y=T$. Indeed, it holds because $\gamma_n|_Y=\delta_n|_Y\circ T$.
\item $\|S\|=\|T\|$. Indeed, $\|T\|= \|S|_Y\|\leq\|S\|\leq \sup_{n\in\mathbb{N}}\|\gamma_n\|\leq \|T\|$.
\end{itemize}
\end{proof}

\section*{Funding}

F.J. Garc\'{i}a-Pacheco and F. Rambla-Barreno were supported by Junta de Andaluc\'{\i}a FQM-257, Plan Propio de la Universidad de C\'adiz and FEDER/Ministerio de Ciencia, Innovaci\'on y Universidades - Agencia Estatal de Investigaci\'on PGC2018-101514-B-I00. F.J. Garc\'{i}a-Pacheco was supported by the 2014-2020 ERDF Operational Programme and by the Department of Economy, Knowledge, Business and University of the Regional Government of Andalusia, grant number FEDER-UCA18-105867. F. Rambla-Barreno was supported by the Department of Economy, Knowledge, Business and University of the Regional Government of Andalusia, grant number FEDER-UCA18‐108415.

\section*{Conflicts of interest/Competing interests}

The authors declare that they have no conflict of interests or competing interests.

\section*{Author’s contributions}

All authors contributed equally and signif\mbox{}icantly in writing this article. All authors read and approved the f\mbox{}inal manuscript.

\end{document}